\documentclass[amssymb,12pt]{article}

\usepackage{mathtools}
\usepackage{geometry,amssymb,amsthm,amsmath,
graphics,enumerate}
\usepackage{times}
\usepackage{amsfonts}
\usepackage{amscd}
\usepackage{url}
\usepackage{enumitem}
\author{ Michael C.\
Laskowski 
\\
Department of Mathematics\\University of Maryland
}
\newbox\smilebox
\newbox\anchorbox
\newbox\noanchorbox
\newbox\tempbox

\setbox\smilebox=\hbox{$\smile$}

\def\anchor{\hbox{\vtop{
           \hbox to \wd\smilebox{\hfil\vrule width.4pt height7pt depth1pt\hfil}
           \vskip  -11.5truept
           \hbox to \wd\smilebox{\hfil$\smile$\hfil}}}}
\setbox\anchorbox=\anchor
\def\noanchor{\hbox{\vtop{
           \hbox to \wd\anchorbox{\hfil\anchor\hfil}
           \vskip -14truept
           \hbox to \wd\anchorbox{\hfil/\hfil}}}}
\setbox\noanchorbox=\noanchor

\def\fg#1#2#3{\setbox\tempbox=\hbox{$\scriptstyle{#2}$}
\ifnum\wd\anchorbox>\wd\tempbox\dimen255=\wd\anchorbox
\else\dimen255=\wd\tempbox\fi
{#1\,\vtop{\hbox to \dimen255{\hfil\anchor\hfil}
           \vskip -6truept
           \hbox to \dimen255{\hfil$\scriptstyle{#2}$\hfil}}
           \,#3}}

\def\nfg#1#2#3{\setbox\tempbox=\hbox{$\scriptstyle{#2}$}
\ifnum\wd\noanchorbox>\wd\tempbox\dimen255=\wd\noanchorbox
\else\dimen255=\wd\tempbox\fi
{#1\,\vtop{\hbox to \dimen255{\hfil\noanchor\hfil}
           \vskip -6truept
           \hbox to \dimen255{\hfil$\scriptstyle{#2}$\hfil}}
           \,#3}}

\setbox1=\hbox{$\bot$}

\def\north#1#2{#1\,
\hbox{$\bot$\llap {\hbox to\wd1 {\hfil $/$\hfil}}}
\,#2}

\def\nao#1#2#3{#1\  \hbox{\vtop{ 
\baselineskip=4pt
\hbox{$\bot$\llap {\hbox to\wd1 {\hfil $/$\hfil}}
\hskip .05em \llap{\hbox{$^{\scriptscriptstyle{a}}$}}}\hbox{$\scriptstyle
{#2}$}}}\, #3}

\def\includeE#1{{\lhook\kern-3.5pt\joinrel\smash{
    \mathop{\longrightarrow}\limits^{#1}}}}

\def\efor/{Example~\ref{E4}}

\def\BL/{Baldwin--Lachlan}
\def\Bu/{Buechler}
\def\Hr/{Hrushovski}
\def\lm/{locally modular}
\def\wm/{weakly minimal}
\def\nm/{non--modular}
\def\ss/{superstable}
\def\ud/{unidimensional}
\def\sm/{strongly minimal}

\def\abar{\overline{a}}

\def\bbar{\overline{b}}
\def\cbar{\overline{c}}
\def\dbar{\overline{d}}
\def\ebar{\overline{e}}
\def\fbar{\overline{f}}

\def\hbar{\overline{h}}

\def\kbar{\overline{k}}

\def\mbar{\overline{m}}

\def\xbar{\overline{x}}
\def\ybar{\overline{y}}
\def\zbar{\overline{z}}

\def\tp{{\rm tp}}

\def\tr/{trivial}
\def\nt/{non--trivial}
\def\st/{strong type}

\def\TV/{Tarski--Vaught}

\def\sc/{sound construction}
\def\ac/{atomic construction}

\def\fal/{functional}
\def\upl/{unique parallel lines}
\def\chp/{categorical in a higher power}

\def\includeE#1{{\lhook\kern-3.5pt\joinrel\smash{
    \mathop{\longrightarrow}\limits^{#1}}}}

\def\efor/{Example~\ref{E4}}

\def\BL/{Baldwin--Lachlan}
\def\Bu/{Buechler}
\def\Hr/{Hrushovski}
\def\lm/{locally modular}
\def\wm/{weakly minimal}
\def\nm/{non--modular}
\def\ss/{superstable}
\def\ud/{unidimensional}
\def\sm/{strongly minimal}

\def\abar{\overline{a}}

\def\bbar{\overline{b}}
\def\cbar{\overline{c}}
\def\dbar{\overline{d}}
\def\ebar{\overline{e}}
\def\fbar{\overline{f}}

\def\hbar{\overline{h}}

\def\kbar{\overline{k}}

\def\mbar{\overline{m}}

\def\xbar{\overline{x}}
\def\ybar{\overline{y}}
\def\zbar{\overline{z}}

\def\tp{{\rm tp}}

\def\tr/{trivial}
\def\nt/{non--trivial}
\def\st/{strong type}
\def\conc{{\char'136}}
\def\abar{\bar{a}}
\def\bbar{\bar{b}}
\def\cbar{\bar{c}}
\def\dbar{\bar{d}}
\def\ebar{\bar{e}}
\def\ybar{\bar{y}}
\def\phi{\varphi}

\def\C{{\mathfrak  C}}

\def\FF{{\bf F}}

\def\S{{\cal S}}

\def\Z{{\mathbb Z}}
\def\tp{{\rm tp}}

\def\Fa0{{\FF^a_{\aleph_0}}}

\def\<{\langle}
\def\>{\rangle}

\newtheorem{Theorem}{Theorem}[section]
\newtheorem{Proposition}[Theorem]{Proposition}
\newtheorem{Definition}[Theorem]{Definition}

\newtheorem{Remark}[Theorem]{Remark}
\newtheorem{Example}[Theorem]{Example}
\newtheorem{Lemma}[Theorem]{Lemma}
\newtheorem{Corollary}[Theorem]{Corollary}

\newtheorem{Fact}[Theorem]{Fact}

\def\ss{{\bf s}}

\DeclareMathOperator{\Aut}{Aut}

\def\K1{{\mathbf K_1}}

\newcommand\myrestriction{\mathord\restriction}
\def\mr#1{\myrestriction_{#1}}
\def\0bar{\overline{0}}

\begin{document}

	\title{Forking and invariant types in monadic NIP theories}
	
	\date{\today}

	\maketitle
	
	\begin{abstract}  \noindent We obtain a strong decomposition theorem for invariant global types in a monadically NIP theory.  From this, we prove that if an $n$-type $\tp(\abar/MC)$ does not fork over $M$  then $\abar=(\fbar,\dbar)$ where $\tp(\dbar/MC)$ is $M$-definable and $\tp(\fbar/MC\dbar)$ is finitely satisfied in $M$.  
Over arbitrary base sets $B$, we prove that an $n$-type $\tp(\abar/BC)$ does not fork over $B$ if and only if $\tp(a/BC)$ does not fork over $B$ for each singleton $a\in\abar$.  We show that monadically NIP theories
satisfy density of definable types among non-forking extensions.  
\end{abstract}

\section{Introduction}  

The notions of {\em forking} and {\em non-splitting} were introduced by Saharon Shelah over fifty years ago.  They were explored in depth for stable theories, but the notions make sense inside a monster model $\C$ of any complete theory $T$.   Not surprisingly, the properties of (non)-forking depend on the assumptions on the theory $T$.  Here, we are interested in describing when $$\fg {\abar} B C$$
holds,  i.e., `$\tp(\abar/BC)$ does not fork over $B$' where $B$ and $C$ are small subsets of $\C$ and $\abar$ is a finite tuple.
Here, we concentrate on {\em monadically NIP} theories (see Fact~\ref{monNIPequiv}). 

The analysis of forking over models is easier than over general sets, mostly due to 
Fact~\ref{start} below.   Understanding non-forking over models is related to an analysis of the $M$-invariant global types over the monster model $\C$.   The $M$-invariant global 1-types in a dp-minimal theory were classified by Pierre Simon, see Fact~\ref{dptrichotomy}.   
Section~\ref{PPP} gives many preparatory lemmas using weaker hypotheses, but the culmination of the section is Theorem~\ref{biginvariant}, which gives a strong decomposition theorem for an arbitrary global, $M$-invariant $n$-type in a monadically NIP theory.  

Next, in Section~\ref{QQQ}, we drop this down to types over small sets $MC$ that contain an elementary substructure $M\preceq\C$.  This is surprisingly delicate. One of the main issues is that a 1-type over $MC$  that is both $M$-definable and finitely satisfied in $M$ need not have the strong consequences of a global type with these properties.  (The 1-type of being infinitely large in a model of DLO is one example of this.)  Despite this, Theorem~\ref{partition} gives a  decomposition theorem for an $n$-tuple
$\tp(\abar/MC)$ that does not fork over $M$.    From this, we get a `1-sided triviality' result, Corollary~\ref{triviality} that is related to the fact that non-forking is totally trivial in any monadically stable theory.    The following restates Theorem~\ref{partition} and Corollary~\ref{triviality}.

\begin{Theorem} [$T$ monadically NIP]  \label{restate} Suppose $M\preceq\C$ is small, $C\supseteq M$ is small, and $\abar=(a_1,\dots,a_n)$ is a finite tuple.
Let $$F^*=\{i:\tp(a_i/C) \ \hbox{is finitely satisfied in $M\}$}\quad \hbox{and}  \quad D=[n]\setminus F^*$$
Then the following are equivalent.
\begin{enumerate}
\item  $\tp(\abar/C)$ does not fork over $M$;
\item  $\tp(a_i/C)$ does not fork over $M$ for every $i$; and
\item  $\tp(\abar_D/C)$ is $M$-definable and $\tp(\abar_{F^*}/C\abar_D)$ is finitely satisfied in $M$.
\end{enumerate}
\end{Theorem}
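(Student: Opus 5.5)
The plan is to prove $(1)\Rightarrow(2)\Rightarrow(3)\Rightarrow(1)$.

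\textbf{$(1)\Rightarrow(2)$.} This is immediate, since non-forking is preserved under taking subtuples.

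\textbf{$(3)\Rightarrow(1)$.} I would use Fact~\ref{start}: in an NIP theory, $\tp(\abar/C)$ does not fork over $M$ iff it extends to a global $M$-invariant type. I build such an extension in two stages.
\begin{enumerate}
\item Since $\tp(\abar_D/C)$ is $M$-definable, its defining scheme gives a global $M$-definable extension $p$. Choose $\abar_D'\models p$ in a larger monster model. As $\abar_D'\equiv_C \abar_D$, there is an automorphism $\sigma$ fixing $C$ with $\sigma(\abar_D)=\abar_D'$.
\item The type $\sigma(\tp(\abar_{F^*}/C\abar_D))$ is a type over $C\abar_D'$ finitely satisfied in $M$. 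By compactness it extends to a type $r$ over $\C\abar_D'$ that is finitely satisfied in $M$. Let $\abar_{F^*}'\models r$.
\end{enumerate}
Now $\tp(\abar_D'\abar_{F^*}'/\C)$ extends $\tp(\abar/C)$. To check it is $M$-invariant, take $\tau\in\Aut(\C/M)$ and extend it to the bigger model. Then $\tau$ fixes $p$, because $p$ is $M$-definable. It also preserves the condition ``each formula with parameters from $\C\abar_D'$ is satisfied by some tuple from $M$'', which pins down the type of $\abar_{F^*}'$ up to $\tau$. I expect the only care needed here is this bookkeeping: phrase the invariance as ``$\phi(\xbar,\abar_D',c)$ vs.\ $\phi(\xbar,\abar_D',\tau c)$'' and use that $p$ is $M$-definable, so $\abar_D'$ realizes the same type over $\C$ as its image.

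\textbf{$(2)\Rightarrow(3)$.} This is the substantial direction. First I would show that for each $i\in D$ the 1-type $\tp(a_i/C)$ is $M$-definable. By (2) and Fact~\ref{start}, it extends to a global $M$-invariant 1-type $q_i$. The monadically NIP case of Simon's trichotomy (Fact~\ref{dptrichotomy}, as sharpened in Theorem~\ref{biginvariant} for $n=1$) says $q_i$ is either $M$-definable or finitely satisfied in $M$. Since $i\notin F^*$, the restriction of $q_i$ to $C$ is not finitely satisfied, so $q_i$ must be $M$-definable.

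The delicate point is that we only know the restriction to $C$ fails to be finitely satisfied. We therefore need that whenever $\tp(a_i/C)$ \emph{has} some finitely satisfied global extension, it was finitely satisfied in $M$ to begin with. This is automatic, since $q_i\mr{C}=\tp(a_i/C)$.

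Next I would upgrade this to the tuples. The claims are that $\tp(\abar_D/C)$ is $M$-definable jointly, and that $\tp(\abar_{F^*}/C\abar_D)$ is finitely satisfied in $M$. My approach is to realize the whole tuple inside a global $M$-invariant type and apply the $n$-type decomposition Theorem~\ref{biginvariant}. That theorem should split the coordinates into a definable block and a block finitely satisfied over the definable block. The obstacle is that (2) only gives invariant extensions coordinatewise, not for $\abar$ jointly, so Fact~\ref{start} cannot be applied to $\abar$ directly. I would try to combine them by induction on $n$. Assume the statement for $\abar_{<n}$, and show that adding a singleton $a_n$ whose 1-type over $C$ is non-forking keeps the structure:
\begin{itemize}
\item if $a_n$ is definable, the monadic NIP hypothesis (via the lemmas of Section~\ref{PPP}) should force $\tp(a_n/C\abar_{F^*})$ to remain definable;
\item if $a_n$ is finitely satisfied, it should remain finitely satisfied over $C\abar_D$.
\end{itemize}
I expect this ``no interaction between definable and coheir coordinates'' step to be the main obstacle. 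It is exactly where the DLO example (a type both definable and finitely satisfied over $MC$) must be handled, and this is why $D$ is defined as the complement of $F^*$ rather than as the set of definable coordinates.
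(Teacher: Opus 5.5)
Your $(1)\Rightarrow(2)$ is fine, and so is the first step of $(2)\Rightarrow(3)$: each $\tp(a_i/C)$ with $i\in D$ is $M$-definable via a global invariant extension and the trichotomy, exactly as in the paper. Your $(3)\Rightarrow(1)$ is correct but differs from the paper, which just invokes transitivity of non-forking; you instead build the global $M$-invariant extension by hand. That works: $M$-invariance of $\tp(\abar_D'/\C)$ turns $\cbar\equiv_M\cbar'$ into $\cbar\equiv_{M\abar_D'}\cbar'$. Finite satisfiability of $\tp(\abar_{F^*}'/\C\abar_D')$ then moves any discrepancy between $\phi(\xbar,\abar_D',\cbar)$ and $\phi(\xbar,\abar_D',\cbar')$ onto a tuple from $M$.

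The genuine gap is the rest of $(2)\Rightarrow(3)$, which is the real content of the theorem; your proposal only says what ``should'' happen and flags it as the main obstacle. Two things must actually be proved.

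(i) Joint $M$-definability of $\tp(\abar_D/C)$. As you note, Theorem~\ref{biginvariant} is unavailable because no joint global invariant extension is known. Definability also does not pass from $M$ to $C$ for free, since an $M$-definable type over $M$ can have non-definable extensions to $C$. The paper first gets joint definability over $M$ (Lemma~\ref{overM}). A global heir of $\tp(\abar_D/M)$ has each coordinate equal to the unique definable heir. So it is $M$-invariant by Corollary~\ref{iterateMinv}, and then $M$-definable by Corollary~\ref{coordinates}(2), i.e.\ the new Proposition~\ref{meld}(2). It then lifts to $C$ by induction (Lemma~\ref{overMC}). Suppose $\tp(\ebar/C)$ and $\tp(d/C)$ are $M$-definable. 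Heir--coheir duality makes $\tp(C/M\ebar)$ finitely satisfied in $M$, and the f.s.\ dichotomy with $d$ gives two cases. Either $\tp(\ebar d/C)$ is an heir of the definable $\tp(\ebar d/M)$, hence is the definable one. Or $\tp(\ebar/Cd)$ is the definable heir of $\tp(\ebar/M)$, and you compose it with $\tp(d/C)$ via Fact~\ref{compose}.

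(ii) Finite satisfiability of $\tp(\abar_{F^*}/C\abar_D)$. Here $\tp(\abar_{F^*}/C)$ is finitely satisfied by Fact~\ref{3.3}(1). One then adjoins the elements $d_j$ of $\abar_D$ one at a time using the f.s.\ dichotomy. The branch ``$\tp(\abar_{F^*}d_j/Cd_{<j})$ finitely satisfied'' is impossible, since it would make $\tp(d_j/C)$ finitely satisfied in $M$, contradicting $j\notin F^*$.

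Your first bullet also aims at the wrong target. An inductive step adding $a_n$ with $n\in D$ must deliver joint definability of $\tp(\abar_D a_n/C)$ and finite satisfiability of $\tp(\abar_{F^*}/C\abar_D a_n)$. Definability of $\tp(a_n/C\abar_{F^*})$ does not by itself give either.
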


Next, in Section~\ref{ZZZ} we use the previous results to describe forking over arbitrary sets.  Our  main results are\footnote{After writing this article, we understand that Artem Chernikov has independently proved Theorem~\ref{restate2}.  However, the proofs are rather different.}

\begin{Theorem} [$T$ monadically NIP]  \label{restate2} Suppose $B,C$ are small sets and $\abar=(a_1,\dots,a_n)$ is a finite tuple, and $c$ is any singleton.  
\begin{enumerate}  
\item $\tp(\abar/BC)$ does not fork over $B$ if and only if $\tp(a_i/BC)$ does not fork over $B$ for each $a_i\in\abar$; and
\item  Suppose $\fg {\abar}  B C$.  Then either $\fg {\abar c} B C$ or $\fg {\abar} B {Cc}$ holds. 
\item  For any small set $B$ and singletons $a,b,c$, $\nfg a B b$ and $\nfg b B c$ imply $\nfg a B c$.
\end{enumerate}
\end{Theorem}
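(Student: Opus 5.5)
Theorem~\ref{restate2} is stated in the introduction as a summary of Section~\ref{ZZZ}, so the plan is to reduce forking over an arbitrary base $B$ to forking over models and then apply Theorem~\ref{restate}. The standard tool is the one behind Fact~\ref{start}. In an NIP theory, $\tp(\abar/BC)$ does not fork over $B$ if and only if there is a model $M\supseteq B$ with $\fg{M}{B}{C}$ (chosen suitably, e.g.\ via a $B$-invariant or strictly non-forking construction) such that $\tp(\abar/MC)$ does not fork over $M$. I would first establish a version of this in which a single model $M$ can be chosen to work simultaneously for all coordinates $a_i$. This is the main obstacle: each $a_i$ separately gives its own model $M_i$, and these must be amalgamated.

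For this step I would take $M\supseteq B$ with $\tp(M/BC)$ finitely satisfied in, or strictly invariant over, $B$. Then I would use the NIP fact that forking equals dividing over such an extension base. The aim is that if $\fg{a_i}{B}{C}$, then we can move $a_i$ by an automorphism over $BC$ so that $\fg{a_i}{M}{C}$ (applying extension to $\tp(a_i/BC)$ over $MC$ after reordering). The subtle point is to realize all $a_i$ at once. To handle this, I would choose $M$ to be a realization of a global $B$-strictly-invariant type, taken in a sequence Morley over $BC\abar$. A Morley sequence $(M_j)$ in $\tp(M/BC)$ that is indiscernible over $\abar$ and $C$ then forces $\fg{a_i}{M}{C}$ for each $i$, by Kim's lemma for strict Morley sequences applied to each $\tp(a_i/BC)$.

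With such an $M$, Theorem~\ref{restate}(2)$\Rightarrow$(1) gives $\fg{\abar}{M}{C}$. Transitivity of non-forking through $M$, which holds because $\fg{M}{B}{C}$ strictly, then yields $\fg{\abar}{B}{C}$. The converse direction of (1) is immediate from monotonicity. This proves (1).

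For (2), I would reduce to coordinates via (1). Since $\fg{\abar}{B}{C}$, each $a_i$ satisfies $\fg{a_i}{B}{C}$. Suppose $\nfg{\abar c}{B}{C}$; by (1) this means $\nfg{c}{B}{C}$. Passing to a model $M$ as above, I would use the decomposition in Theorem~\ref{restate}(3) for $\tp(\abar/MC)$ together with the dichotomy for the singleton $c$, namely definable versus finitely satisfied (Simon's trichotomy, Fact~\ref{dptrichotomy}, and Theorem~\ref{biginvariant}). The aim is to show $\tp(\abar/MCc)$ remains non-forking coordinatewise. Each $a_i$ with a finitely satisfied type stays finitely satisfied over $Cc$ once $c$'s type is suitably definable. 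Each $a_i$ with a definable type stays definable, because monadic NIP gives triviality of the interaction (Corollary~\ref{triviality}).

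For (3), I would argue by contrapositive from (2). Suppose $\fg{a}{B}{c}$ and $\nfg{b}{B}{c}$. Applying (2) with $\abar=a$, $C=c$ and the singleton $b$ gives either $\fg{ab}{B}{c}$ or $\fg{a}{B}{cb}$. The first would give $\fg{b}{B}{c}$ by monotonicity, which is false. So $\fg{a}{B}{bc}$, hence $\fg{a}{B}{b}$, contradicting $\nfg{a}{B}{b}$.
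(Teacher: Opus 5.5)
Your derivation of (3) from (2) is correct and is the paper's argument. Parts (1) and (2), however, have genuine gaps.

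In (1), the characterization you start from fails in the ``only if'' direction. The base $B$ need not be an extension base, so a model $M\supseteq B$ with $\fg{M}{B}{C}$ may not exist. Take a dense circular order on a predicate $P$ and a dense linear order on its complement $Q$, with no interaction between them. This structure is definable in $(\mathbb{Q},<,P)$ with $P$ dense and codense, so it is monadically NIP. Here $P(x)$ already forks over $\emptyset$, so $\tp(M/\emptyset)$ forks over $\emptyset$ for every model $M$. On the other hand, every $a\in Q$ has $\tp(a/\emptyset)$ extending to the $\emptyset$-invariant global type at $+\infty$ in $Q$. So with $B=C=\emptyset$ the hypothesis of (1) holds for any tuple from $Q$, yet there is no admissible $M$ and no global strictly $B$-invariant type of a model to work with.

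Even over an extension base, the appeal to Kim's lemma does not do the job. That lemma says a formula $\phi(x,\cbar)$ dividing over $B$ divides along strict Morley sequences in $\tp(\cbar/B)$. It gives no information about $\tp(a_i/MC)$ over the new base $M$. So the simultaneous choice of $M$, which you rightly identify as the crux, is never established.

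In (2), the only case that matters is $\nfg{c}{B}{C}$. Then $\tp(c/\C)$ is not invariant over some model $M\supseteq B$, so Simon's definable/finitely-satisfied dichotomy says nothing about $c$. Corollary~\ref{triviality} only combines coordinates already known not to fork. Neither shows that an $M$-definable coordinate $a_i$ stays non-forking once $c$ is added.

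That missing step is Proposition~\ref{MI}: if $p=\tp(a/\C)$ is $M$-invariant and $\tp(c/\C)$ is not, then $a$ realizes $p^*|\C c$.
\begin{itemize}
\item The finitely satisfied case is the f.s.\ dichotomy: $\tp(ac/\C)$ cannot be finitely satisfied in $M$. It has nothing to do with $c$'s type being definable.
\item The definable case needs a new idea. Name $\C$ by a unary predicate $U$ inside $\C^*$. Then build two sequences, mutually indiscernible over $M$ and neither indiscernible over $Mc$, contradicting dp-minimality of $Th(\C^*,U)$.
\end{itemize}

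The paper also works with global types throughout. It first arranges that $\tp(\abar/\C)$ does not fork over $B$, so that fullness and uniqueness of invariant extensions (Fact~\ref{fullstationary}) apply; over a small $MC$ this rigidity is lost.

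Finally, the paper reverses your order: it proves (2) first and obtains (1) by induction. Given $\fg{\abar}{B}{C}$ and $\fg{a^*}{B}{C}$, (2) yields either $\fg{\abar a^*}{B}{C}$ or $\fg{\abar}{B}{Ca^*}$, and left transitivity finishes. So no common model for all coordinates is ever needed.
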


Finally, in Section~\ref{RRR} we use the decomposition given in Theorem~\ref{partition} to  prove that the set of $M$-definable global types 
are dense in the set of all non-forking global types over $M$.  This was proved for dp-minimal theories with property (D) by Pierre Simon and Sergei Starchenko in \cite{SS}, but we close with an example of a monadically NIP theory that fails property (D).


\medskip
\noindent

\section{General remarks about forking}

In this paper, we investigate the non-forking relation, first  over models $M$, but then over arbitrary sets $B$, in monadically NIP theories.    
In \cite{BL1,BL2} Braunfeld and the author gave many characterizations of a theory being monadically NIP.   
In this paper we use these three equivalents.  

\begin{Fact}  \label{monNIPequiv}  The following are equivalent for a complete theory $T$.
\begin{enumerate}
\item  $T$ is monadically NIP;
\item  $T$ is dp-minimal and has endless indiscernible triviality (EIT);
\item  $T$ has the f.s.\ dichotomy.
\end{enumerate}
\end{Fact}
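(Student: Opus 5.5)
This Fact is a compilation of earlier work, so the plan is to assemble it from the characterizations in \cite{BL1,BL2} rather than reprove each equivalence. I will prove the cycle $(1)\Rightarrow(2)\Rightarrow(3)\Rightarrow(1)$, stating for each arrow which known result supplies it and which definitions must be matched up.

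For $(1)\Rightarrow(2)$: monadic NIP implies dp-minimality, since a witness to dp-rank $\ge 2$ in a single variable (an ict-pattern of depth 2) can be coded by adding unary predicates for the relevant parameter sets. This yields a formula with the independence property in a monadic expansion. Endless indiscernible triviality is the statement that if a sequence $I$ (without endpoints) is indiscernible over $A$ and over each singleton $b$, then it is indiscernible over $Ab$. If EIT fails, a standard coding argument (Braunfeld--Laskowski) produces, after naming unary predicates, a definable relation that encodes an arbitrary bipartite graph on $I\times I$. Hence some monadic expansion has IP. I would cite \cite{BL1} for this direction.

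For $(2)\Rightarrow(3)$: the f.s.\ dichotomy says that whenever $\tp(a/M b)$ is finitely satisfied in $M$ for a singleton $a$ (or its failure is witnessed appropriately), any extension to $Mbc$ is either f.s.\ in $M$ or else $a$ becomes algebraic-like/definable over $Mc$. In \cite{BL2} this is derived from dp-minimality together with EIT. The idea is to take a Morley sequence in a global coheir, apply dp-minimality so that a singleton $c$ can break indiscernibility of at most one ``cut'', and then use EIT to glue the indiscernibility over each singleton into indiscernibility over the whole set. I expect this to be the main obstacle: translating the indiscernible-sequence statement (EIT) into the type-theoretic dichotomy needs a careful choice of the sequence, namely a coheir sequence over $M$ and its reverse. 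I would follow \cite{BL2} for that step.

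For $(3)\Rightarrow(1)$: the f.s.\ dichotomy is preserved under adding unary predicates, because it only involves finite satisfiability over models and singletons, and unary predicates do not change which singletons are added. So it suffices to show that the f.s.\ dichotomy implies NIP. Suppose instead that $\phi(x,y)$ with $x$ a singleton has IP; we may reduce to a singleton $x$ via the Shelah--type reduction for monadic properties. From a shattered indiscernible sequence, build an $M$ and elements violating the dichotomy. This is again in \cite{BL2}, so the Fact follows by citation.
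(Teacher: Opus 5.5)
Citing \cite{BL1,BL2} is exactly what the paper does: the Fact is stated there without proof and attributed to those papers. So insofar as your argument \emph{is} the citation, it agrees with the paper. However, two parts of your supporting sketch are wrong, and they would be genuine gaps if the sketch were meant to carry any weight.

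First, you have misstated the f.s.\ dichotomy. As defined in the paper, it says: for all small $M\preceq\C$, all small sets $A,B$ and every singleton $c$, if $\tp(A/MB)$ is finitely satisfied in $M$, then at least one of $\tp(Ac/MB)$ and $\tp(A/MBc)$ is finitely satisfied in $M$. That is, $c$ can be adjoined either to the realized side or to the parameter side without losing finite satisfiability in $M$. There is no alternative of the form ``$a$ becomes algebraic or definable over $Mc$''. That looks like a conflation with Simon's f.s./definable trichotomy for global invariant $1$-types (Fact~\ref{dptrichotomy}). So your outline of $(2)\Rightarrow(3)$ is aimed at the wrong statement and cannot be assessed as it stands.

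Second, your reduction of $(3)\Rightarrow(1)$ to ``the f.s.\ dichotomy implies NIP'' needs the f.s.\ dichotomy to be preserved under expansions by unary predicates, and the reason you offer does not establish this. In an expansion $(\C,U)$, both the class of elementary submodels $M$ and the meaning of ``$\tp(A/MB)$ is finitely satisfied in $M$'' are computed with $L(U)$-formulas, which can quantify over $U$. The dichotomy for $T$ says nothing directly about these formulas, and the remark that unary predicates ``do not change which singletons are added'' is beside the point. Preservation does hold, but it is a substantive fact, most easily seen as a corollary of the very equivalence you are proving (monadic NIP is preserved by definition). Invoking it without proof therefore makes this direction circular. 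You would need either a real proof of preservation, or a direct contrapositive argument that turns a unary expansion with IP into a failure of the dichotomy in $T$ itself.

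By contrast, two other parts of your sketch are fine. Your argument that monadic NIP implies dp-minimality is correct: name the set of witnesses $a_{ij}$ of a depth-$2$ ict-pattern indexed by the edges of a bipartite graph with IP, so that $\exists x\,(U(x)\wedge\phi(x,\ybar)\wedge\psi(x,\zbar))$ defines that graph. Your formulation of EIT is equivalent to the paper's by finite character.
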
  

From \cite{BS}, $T$ has {\em monadic NIP} if the theory of the expansion of any model of $T$ by any number of unary predicates
is NIP.   Clearly, if $Th(M)$ is monadic NIP, then so is $Th(M,U)$, where $(M,U)$ is an arbitrary expansion of $M$ by a unary predicate.
From \cite{BL1,BL2}, $T$ has {\em endless indiscernible triviality (EIT)}  if, for every sequence $I=(\ebar_i:i\in\Z)$ indexed by the integers and for all sets $A,B$,
if $I$ is indiscernible over $A$ and indiscernible over $B$, then $I$ is indiscernible over $AB$.   A theory $T$ has the {\em f.s.\ dichotomy} if, for all small $M\preceq\C$, all small sets $A,B$, and all singletons $c$, if $\tp(A/MB)$ is finitely satisfied in $M$, then at least one of $\tp(Ac/MB)$ and $\tp(A/MBc)$ is finitely satisfied in $M$ as well.

The starting point for our discussion is the following result, which appears in many places, including 5.22 of \cite{PierreBook}.

\begin{Fact}   \label{start}   ($T$ NIP)   Suppose $M\preceq\C$ is small.  An $L(\C)$ formula  $\phi(\xbar,\dbar)$ does not fork over $M$ if and only if there is a global type
$p(\xbar)\in S(\C)$ that is $M$-invariant and $\phi(\xbar,\dbar)\in p$.
\end{Fact}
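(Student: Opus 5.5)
The plan is to prove the two directions separately: the easy direction checks non-dividing directly from invariance, and the other direction goes through the standard fact that, in an NIP theory, global types that do not fork over a model $M$ are $M$-invariant.

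\emph{Right to left.} Suppose $p\in S(\C)$ is $M$-invariant and $\phi(\xbar,\dbar)\in p$. It suffices to show that no formula in $p$ divides over $M$.
\begin{itemize}
\item Take $\psi(\xbar,\ebar)\in p$ and an $M$-indiscernible sequence $(\ebar_i:i<\omega)$ with $\ebar_0=\ebar$.
\item Each $\ebar_i$ has the same type as $\ebar$ over $M$, so by invariance every $\psi(\xbar,\ebar_i)$ lies in $p$. Hence $\{\psi(\xbar,\ebar_i)\}$ is consistent, and $\psi(\xbar,\ebar)$ does not divide.
\item Now suppose $\phi(\xbar,\dbar)\vdash\bigvee_{j<k}\psi_j(\xbar,\ebar_j)$. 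Since $p$ is complete, some $\psi_j(\xbar,\ebar_j)$ lies in $p$, and by the previous step it does not divide.
\end{itemize}
So $\phi(\xbar,\dbar)$ does not fork over $M$. This direction does not use NIP.

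\emph{Left to right.} Suppose $\phi(\xbar,\dbar)$ does not fork over $M$.
\begin{itemize}
\item The formulas forking over $M$ form an ideal. Hence $\{\phi(\xbar,\dbar)\}\cup\{\neg\psi:\psi \hbox{ forks over } M\}$ is consistent.
\item Extend this set to a global type $p$. Then $p$ contains $\phi(\xbar,\dbar)$, and no formula of $p$ forks over $M$.
\end{itemize}
The main step is to show that such a $p$ is $M$-invariant; this is where NIP enters.

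Suppose not. Then there are $\bbar\equiv_M\bbar'$ and a formula $\psi$ with $\psi(\xbar,\bbar)\wedge\neg\psi(\xbar,\bbar')\in p$.
\begin{itemize}
\item Since $M$ is a model, $\tp(\bbar/M)$ has a global extension $q$ that is finitely satisfiable in $M$, hence $M$-invariant.
\item Let $(\bbar_i:i\ge 1)$ realize $q^{(\omega)}$ over $M\bbar\bbar'$. Because $\bbar\equiv_M\bbar'$ and $q$ is $M$-invariant, both $(\bbar,\bbar_1,\bbar_2,\dots)$ and $(\bbar',\bbar_1,\bbar_2,\dots)$ are $M$-indiscernible.
\end{itemize}

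Next I would show that $\psi(\xbar,\bbar)\wedge\neg\psi(\xbar,\bbar_1)$ divides over $M$.
\begin{itemize}
\item The pairs $(\bbar_{2i},\bbar_{2i+1})$, reindexing so that $\bbar_0=\bbar$, form an $M$-indiscernible sequence starting with $(\bbar,\bbar_1)$.
\item If $\{\psi(\xbar,\bbar_{2i})\wedge\neg\psi(\xbar,\bbar_{2i+1}):i<\omega\}$ were consistent, then $\psi$ would alternate infinitely often along an indiscernible sequence. This contradicts NIP.
\item The consistency set is $k$-inconsistent for some $k$ by compactness, so the formula divides.
\end{itemize}
The same argument with $(\bbar',\bbar_1,\dots)$ shows that $\neg\psi(\xbar,\bbar')\wedge\psi(\xbar,\bbar_1)$ divides over $M$.

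Finally, $\psi(\xbar,\bbar)\wedge\neg\psi(\xbar,\bbar')$ implies the disjunction of these two dividing formulas, by splitting on whether $\psi(\xbar,\bbar_1)$ holds. So this formula of $p$ forks over $M$, a contradiction.

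The main obstacle is arranging that both $\bbar$ and $\bbar'$ start $M$-indiscernible sequences with a common tail. This is why $M$ must be a model: the coheir $q$ exists only over models. Once that is in place, the alternation bound from NIP finishes the argument.
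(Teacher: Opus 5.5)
Your argument is correct and is essentially the standard proof in the source the paper cites for this Fact (5.21--5.22 of Simon's \emph{A guide to NIP theories}); the paper itself gives no proof. As there, you extend $\phi(\xbar,\dbar)$ to a global type that avoids the forking ideal, and then show such a type is $M$-invariant by prepending $\bbar$ and $\bbar'$ to a common coheir Morley sequence over $M$ and invoking bounded alternation from NIP.
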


Thus, if we hope to understand forking over models in a NIP theory, we need to understand what the global $M$-invariant types are.   There are two natural examples of $M$-invariant global types in any complete theory.  The following appear as  Examples 2.16 and 2.17 of \cite{PierreBook}.  

\begin{Fact}   Suppose $p(\xbar)\in S(\C)$ is a global type and $A\subseteq\C$ is small.  If either $p$ is $A$-definable or if $p$ is finitely satisfied in $A$, then
$p(\xbar)$ is $A$-invariant.  
\end{Fact}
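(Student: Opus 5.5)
The plan is to prove the two cases separately. In each case I would fix an automorphism $\sigma\in\Aut(\C)$ with $\sigma\mr A=\mathrm{id}$ and an $L$-formula $\phi(\xbar,\ybar)$. Then, for a tuple $\bbar\in\C$, I would verify that $\phi(\xbar,\bbar)\in p$ if and only if $\phi(\xbar,\sigma(\bbar))\in p$, which is exactly $A$-invariance of $p$. The only tools needed are that $\sigma$ fixes $A$ pointwise and that $\sigma$ preserves truth in $\C$.

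\emph{Definable case.} Suppose $p$ is $A$-definable. Then for each $\phi(\xbar,\ybar)$ there is an $L(A)$-formula $d_p\phi(\ybar)$ such that, for all $\bbar\in\C$, $\phi(\xbar,\bbar)\in p$ iff $\C\models d_p\phi(\bbar)$. Because $d_p\phi$ has parameters only from $A$ and $\sigma$ fixes $A$, we get $\C\models d_p\phi(\bbar)$ iff $\C\models d_p\phi(\sigma(\bbar))$. Combining the two equivalences shows that $\phi(\xbar,\bbar)\in p$ iff $\phi(\xbar,\sigma(\bbar))\in p$.

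\emph{Finitely satisfiable case.} Suppose $p$ is finitely satisfied in $A$, and suppose towards a contradiction that $\phi(\xbar,\bbar)\in p$ but $\phi(\xbar,\sigma(\bbar))\notin p$. Since $p$ is complete, $\neg\phi(\xbar,\sigma(\bbar))\in p$, so $\phi(\xbar,\bbar)\wedge\neg\phi(\xbar,\sigma(\bbar))\in p$. By finite satisfiability there is $\abar\in A$ with $\C\models\phi(\abar,\bbar)\wedge\neg\phi(\abar,\sigma(\bbar))$. Applying $\sigma$ to $\phi(\abar,\bbar)$ and using $\sigma(\abar)=\abar$ gives $\C\models\phi(\abar,\sigma(\bbar))$, which contradicts the second conjunct. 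The reverse implication follows by applying the same argument to $\sigma^{-1}$, which also fixes $A$ pointwise.

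There is no genuine obstacle here. The one point requiring care is the use of completeness of the global type $p$ in the second case: it is what allows the failure of invariance to be turned into a single formula in $p$, which finite satisfiability can then realize in $A$.
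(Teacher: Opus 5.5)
Your proof is correct and is the standard argument from Examples 2.16 and 2.17 of Simon's book; the paper cites those examples rather than giving a proof, so there is nothing further to compare. You phrase invariance via $\Aut(\C/A)$, whereas the paper (e.g.\ in the proof of Proposition~\ref{EITprop}) uses ``$\tp(\bbar/A)=\tp(\bbar'/A)$ implies $\phi(\xbar,\bbar)\in p \leftrightarrow \phi(\xbar,\bbar')\in p$''; these coincide by strong homogeneity of $\C$, and your argument goes through verbatim with $\bbar'$ in place of $\sigma(\bbar)$.
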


If we strengthen the theory to being dp-minimal and restrict only to $M$-invariant global 1-types, these are the only examples.
However, sometimes both of these conditions can occur simultaneously.  
Call an $M$-invariant global type $q\in\S(\C)$ {\em generically stable over $M$} if $q$ is both $M$-definable and is finitely satisfiable in $M$.   This notion has many equivalents, see e.g., 2.29 of \cite{PierreBook}, but it is important that we apply this adjective to global types.  In particular, any Morley sequence built from a generically stable global type over $M$ is totally indiscernible over $M$.  However, taking $T=DLO$, the `infinitely large' type $p(x)\in S_1(M)$ is both $M$-definable and (trivially)  is finitely satisfied in $M$.    But, for global types, we have the following trichotomy, which appears as Theorem~2.8 of \cite{dpInv}.

\begin{Fact} \label{dptrichotomy}   [$T$ dp-minimal]  Let $p(x)\in S_1(\C)$ be a global, $M$-invariant 1-type.  
Then one of the three possibilities holds.
\begin{itemize}
\item   $p$ is finitely satisfied in $M$ but not $M$-definable;
\item  $p$ is $M$-definable but not finitely satisfied in $M$; or
\item  $p$ is generically stable over $M$ (i.e., both finitely satisfied in $M$ and $M$-definable).
\end{itemize}
\end{Fact}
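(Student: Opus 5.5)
We prove the dichotomy in contrapositive form: assuming $p$ is $M$-invariant but \emph{not} finitely satisfied in $M$, we show $p$ is $M$-definable. The three bullets then follow, since every $p$ falls under at least one of ``f.s.\ in $M$'' or ``$M$-definable'', and the bullets list the three Boolean combinations.

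Fix a formula $\theta(x,\cbar)\in p$ with $\theta(M,\cbar)=\emptyset$, witnessing that $p$ is not finitely satisfied in $M$. For a formula $\psi(x,\ybar)$, let $D_\psi=\{\bbar:\psi(x,\bbar)\in p\}$. By $M$-invariance, membership $\bbar\in D_\psi$ depends only on $\tp(\bbar/M)$. So $D_\psi$ corresponds to a subset $S_\psi$ of the Stone space $S_{\ybar}(M)$, and $D_\psi$ is $M$-definable if and only if $S_\psi$ is clopen.

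Suppose toward a contradiction that some $S_\psi$ is not clopen. Then there is $r\in S_{\ybar}(M)$ lying in the closure of both $S_\psi$ and its complement. Using compactness and $M$-invariance, we build an $M$-indiscernible sequence $(\bbar_i:i\in\Z)$ such that $\psi(x,\bbar_i)\in p$ holds exactly for even $i$. I would try to make this sequence a coheir sequence in $r$, i.e.\ with each $\tp(\bbar_i/M\bbar_{<i})$ finitely satisfied in $M$. This is the step where invariance is used, and where the non-clopen set $S_\psi$ is converted into an alternation pattern inside a single type.

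The second step produces a second, independent row. Take $a\models p|_{M\cbar\bar B}$ with $\bar B=\{\bbar_i\}$. The alternation row witnesses that $a$ ``cuts'' the sequence $(\bbar_i)$. The formula $\theta(x,\cbar)$ witnesses that $a$ lies outside every $M$-definable set meeting $M$ in the relevant sense. To turn this into a second row, we take a sequence $(\cbar_j)$ of conjugates of $\cbar$ over $M$, chosen $M$-indiscernible and mutually indiscernible with $(\bbar_i)$. The point is that finite satisfiability of the coheir sequence over $M$ lets us move the $\bbar_i$ to realise in $M$ all the patterns needed, while $\theta(x,\cbar_j)$ excludes $M$. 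Combining the two rows should give an ict-pattern of depth $2$ in the single variable $x$, contradicting dp-minimality. Equivalently, $a$ would have to break indiscernibility of both of two mutually indiscernible sequences.

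I expect the main obstacle to be this interaction step: ensuring that the row built from $\theta$ and the row built from $\psi$ are genuinely mutually indiscernible and realise all four combinations. If the direct construction fails, I would instead use the characterisation of dp-minimality via indiscernible sequences: a singleton cannot break indiscernibility of two mutually indiscernible sequences. I would then verify directly that $a$ breaks the $\psi$-sequence, by alternation, and breaks a coheir sequence in $\tp(\cbar/M)$, since $\theta(x,\cbar)$ has no solution in $M$ while $p$ is $M$-invariant.
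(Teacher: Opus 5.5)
The reduction to ``not finitely satisfied in $M$ implies $M$-definable'' is the right framing. The rest, however, has a genuine gap: the first step is impossible as stated, and the second step breaks nothing. If $(\bbar_i:i\in\Z)$ is $M$-indiscernible, then all $\bbar_i$ have the same type over $M$. By $M$-invariance, $\psi(x,\bbar_i)\in p$ therefore holds for every $i$ or for none, so no alternating $M$-indiscernible sequence exists. More generally, if $I\subseteq\C$ is $M$-indiscernible and $a$ realizes $p$ over $MI$, then $I$ remains indiscernible over $Ma$; this is exactly the Claim in the proof of Proposition~\ref{EITprop}. Hence your $a\models p|_{M\cbar\bar B}$ cuts neither $(\bbar_i)$ nor any sequence of $M$-conjugates $\cbar_j$ over which it realizes $p$, since $\theta(x,\cbar_j)\in p$ for all of them.

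Breaking can only come from elements realized \emph{over} $a$, i.e.\ $a$ must sit inside the sequences. A non-clopen $S_\psi$ does not provide that. The standard tool is Fact~\ref{charMdef}(3): a non-definable $M$-invariant $p$ fails to commute with some global $q(\ybar)$ finitely satisfied in $M$, say $\psi(x,\ybar,e)\in p(x)\otimes q(\ybar)$ and $\neg\psi(x,\ybar,e)\in q(\ybar)\otimes p(x)$. Take a Morley sequence $\bbar_{<i_0}$ in $q$ over $Me$, let $a$ realize $p$ over it, and continue the Morley sequence with $\bbar_{\ge i_0}$ over $a$. Then $\psi(a,\bbar_i,e)$ holds iff $i<i_0$.

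Your $\theta$-row also goes the wrong way. From $\theta(M,\cbar)=\emptyset$ you get $\neg\theta(a,\cbar')$ when $\tp(a/M\cbar')$ is finitely satisfied in $M$, i.e.\ when $\tp(\cbar'/Ma)$ is an \emph{heir} of $\tp(\cbar/M)$; coheirs give nothing. For instance, in DLO let $p$ be the type at $+\infty$ and let $\theta(x,c)$ be $x>c$ with $c>M$. The unique coheir of $\tp(c/M)$ is the cut just above $M$, and every realization of it, in $\C$ or over $\C a$, lies below $a$. So $a$ satisfies $\theta(x,c')$ along every coheir sequence.

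Finally, the compatibility of the two rows, which you leave as ``the main obstacle'', is the actual content of the result; the paper just quotes it as Theorem~2.8 of \cite{dpInv}. One way to settle it is to aim for an ict-pattern of depth $2$ in $x$, which needs no mutual indiscernibility:
\begin{enumerate}
\item Choose $\cbar_{n-1},\dots,\cbar_0\equiv_M\cbar$ with each $\cbar_j$ realizing a coheir of $\tp(\cbar/M)$ over $M\cbar_{>j}$.
\item Choose $e'\equiv_M e$ realizing a coheir of $\tp(e/M)$ over $M\cbar_{<n}$.
\item Choose a Morley sequence $(\bbar_i)_{i<n}$ in $q$ over all of this.
\item For all $i_0,j_0$, the type $\tp(e'\cbar_{<j_0}\bbar_{<i_0}/M\cbar_{\ge j_0})$ is finitely satisfied in $M$. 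Together with $\theta(M,\cbar)=\emptyset$ and $M\preceq\C$, this makes $p|_{Me'\cbar_{<j_0}\bbar_{<i_0}}\cup\{\neg\theta(x,\cbar_j):j\ge j_0\}$ consistent.
\item Realize that type, continue the $q$-Morley sequence over the realization, and move back by an automorphism over $Me'\cbar_{<n}\bbar_{<i_0}$.
\end{enumerate}
This yields $a_{i_0j_0}$ with $\psi(a_{i_0j_0},\bbar_i,e')\leftrightarrow i<i_0$ and $\theta(a_{i_0j_0},\cbar_j)\leftrightarrow j<j_0$. Pairing consecutive terms then gives the forbidden ict-pattern.
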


\section{Global $M$-invariant types in monadic NIP theories}  \label{PPP}

The goal of this section is the decomposition theorem, Theorem~\ref{biginvariant} for monadically NIP theories.  
Given that Fact~\ref{dptrichotomy} gives us a description of the global, $M$-invariant 1-types, we need methods for
`gluing together' compatible global $M$-invariant types.  Although the full strength of monadic NIP is 
used in the main results,  many of the preparatory results can be proved from weaker hypotheses.
To begin, it is remarkable that the following Proposition only requires endless indiscernible triviality.  In particular, NIP is not required.  

\begin{Proposition} [$T$ EIT]  \label{EITprop}  Suppose $M\preceq\C\preceq\C^+$ with $M$ small and $\C$ a monster model.
Suppose $p(\xbar),q(\ybar)\in S(\C)$ are both global, $M$-invariant types.  Then for every $\abar,\bbar$ from $\C^+$, if $\tp(\abar/\C)=p$ and $\tp(\bbar/C)=q$,
then $\tp(\abar\bbar/\C)$ is $M$-invariant as well.
\end{Proposition}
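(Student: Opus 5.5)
The plan is to reduce $M$-invariance of $\tp(\abar\bbar/\C)$ to a statement about indiscernible sequences inside $\C$, and then apply EIT once. Concretely, we must show: for every formula $\phi(\xbar,\ybar,\zbar)$ and all $\ebar,\ebar'$ from $\C$ with $\tp(\ebar/M)=\tp(\ebar'/M)$, we have $\C^+\models\phi(\abar,\bbar,\ebar)\leftrightarrow\phi(\abar,\bbar,\ebar')$.

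\emph{Step 1 (connecting $\ebar$ and $\ebar'$ by indiscernible sequences in $\C$).} Since $M$ is a model, equality of types over $M$ implies equality of Lascar strong types, and the Lascar distance is at most $2$. I will use this in the following concrete form. Fix a global type $r\supseteq\tp(\ebar/M)$ that is finitely satisfied in $M$ (a coheir). Choose $\ebar''\in\C$ realizing $r$ restricted to $M\ebar\ebar'$. Then build Morley sequences of $r$ in $\C$ to show that each of the pairs $(\ebar,\ebar'')$ and $(\ebar',\ebar'')$ occurs as two entries of some $M$-indiscernible sequence. By compactness and saturation of $\C$, stretch each such sequence to one indexed by $\Z$ lying inside $\C$. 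Call these $J_1$ (containing $\ebar,\ebar''$) and $J_2$ (containing $\ebar',\ebar''$).

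This is the step I expect to need the most care: getting the order of the Morley sequence right so that $\ebar$ itself, not merely a realization of $r$ over $M\ebar$, sits in an $M$-indiscernible sequence with $\ebar''$. I would cite the standard fact (e.g., from \cite{PierreBook}) that $\ebar\equiv_M\ebar'$ over a model yields such sequences, rather than redo it by hand.

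\emph{Step 2 (indiscernibility over $M\abar$ and over $M\bbar$).} Let $J$ be either $J_1$ or $J_2$, say $J=(\ebar_i:i\in\Z)\subseteq\C$, indiscernible over $M$. Take two increasing tuples $\ebar_{i_1}\dots\ebar_{i_k}$ and $\ebar_{j_1}\dots\ebar_{j_k}$ from $J$. They have the same type over $M$, so $M$-invariance of $p=\tp(\abar/\C)$ gives that they satisfy the same formulas with $\abar$ plugged in. Hence $J$ is indiscernible over $M\abar$. Likewise, using $q$, it is indiscernible over $M\bbar$.

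\emph{Step 3 (apply EIT).} EIT, applied in $\C^+$ with $A=M\abar$ and $B=M\bbar$, shows that $J$ is indiscernible over $M\abar\bbar$. Therefore $\phi(\abar,\bbar,\ebar)\leftrightarrow\phi(\abar,\bbar,\ebar'')$, using $J_1$. Also $\phi(\abar,\bbar,\ebar')\leftrightarrow\phi(\abar,\bbar,\ebar'')$, using $J_2$. Chaining the two equivalences gives the desired equivalence, so $\tp(\abar\bbar/\C)$ is $M$-invariant. Note that NIP is nowhere used; only EIT and the model-theoretic fact about types over models in Step 1 enter.
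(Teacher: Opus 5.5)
Your proposal is correct and follows essentially the same route as the paper: a coheir $r$ of $\tp(\ebar/M)$, a common point $\ebar''$ realizing $r|M\ebar\ebar'$, and $\Z$-indexed $M$-indiscernible sequences in $\C$ through the pairs $(\ebar,\ebar'')$ and $(\ebar',\ebar'')$. As in the paper, these sequences are indiscernible over $M\abar$ and over $M\bbar$ by invariance, and EIT is applied once for each pair. The step you flagged needs no outside citation: $\ebar$ and $\ebar'$ each realize $r|M$, and $\ebar''$ realizes both $r|M\ebar$ and $r|M\ebar'$, so both pairs are already Morley pairs in $r$ over $M$.
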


\begin{proof}  Checking that a global type is $M$-invariant is finitary.   Thus, choose any $\cbar,\cbar'$ from $\C$ such that $\tp(\cbar/M)=\tp(\cbar'/M)$.  It suffices to show
that $\tp(\cbar/M\abar\bbar)=\tp(\cbar'/M\abar\bbar)$.  Trivially, $\tp(\cbar/M)$ is finitely satisfied in $M$, so let  $r(\zbar)\in S(\C)$ be a global type extending $\tp(\cbar/M)$ that
is finitely satisfied in $M$.  Choose any $\dbar$ from $\C$ realizing the restriction $r|M\cbar\cbar'$.  
We will show that $\tp(\cbar/M\abar\bbar)=\tp(\dbar/M\abar\bbar)=\tp(\cbar'/M\abar\bbar)$, which, as noted above, suffices.
The argument for each of these equalities is symmetric, so we concentrate on the first one.   
Note that the two-element sequence $\cbar,\dbar$ is the beginning of a Morley sequence in $r(\zbar)$ over $M$.   Thus, 
by compactness and homogeneity, the Morley pair $(\cbar,\dbar)$ can be included in  an $M$-indiscernible sequence
$(\ebar_i:i\in\Z)$ from $\C$, indexed by $\Z$.

\medskip
\noindent{\bf Claim.}   $(\ebar_i:i\in\Z)$ is indiscernible over both $M\abar$ and $M\bbar$.

\begin{proof}  The arguments are symmetric, so we concentrate on the first.   Choose any $i_1<\dots<i_n$ and $j_1<\dots<j_n$ from $\Z$.
As $(\ebar_i:i\in\Z)$ is $M$-indiscernible, 
$$\tp(\ebar_{i_1},\dots,\ebar_{i_n}/M)=\tp(\ebar_{j_1},\dots,\ebar_{j_n}/M)$$
Thus, since $\tp(\abar/\C)$ is $M$-invariant, we conclude 
$$\tp(\ebar_{i_1},\dots,\ebar_{i_n}/M\abar)=\tp(\ebar_{j_1},\dots,\ebar_{j_n}/M\abar)$$
completing the proof of the Claim.
\qed
\end{proof}

Thus, by endless indiscernible triviality, we have $(\ebar_i:i\in\Z)$ indiscernible over $M\abar\bbar$.  As $\cbar,\dbar$ are contained in the sequence, it follows
that  $\tp(\cbar/M\abar\bbar)=\tp(\dbar/M\abar\bbar)$, as required.

The verification of the second equality is symmetric, by choosing a separate $M$-indiscernible sequence $(\ebar_i':i\in\Z)$ containing both $\cbar'$ and $\dbar$
and arguing as above.
\qed
\end{proof}

Thus, by induction on $n$ we obtain the following:

\begin{Corollary}   \label{iterateMinv}  [$T$  EIT]  Suppose $M\preceq\C$ is small.
For any $n$, a global type $p(x_1,\dots,x_n)\in S_n(\C)$ is $M$-invariant if and only if $p\mr{x_i}(x_i)\in S_1(\C)$ is $M$-invariant for each $i$.
\end{Corollary}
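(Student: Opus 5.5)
The forward direction is immediate and needs no hypothesis on $T$. If $p(x_1,\dots,x_n)$ is $M$-invariant and $\sigma\in\Aut(\C/M)$, then $\sigma$ fixes $p$, hence also fixes each restriction $p\mr{x_i}$; equivalently, for any $\phi(x_i,\cbar)$ and $\cbar'\equiv_M\cbar$, membership of $\phi(x_i,\cbar)$ in $p\mr{x_i}$ is the same as membership in $p$, so it agrees with that of $\phi(x_i,\cbar')$.

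For the converse I would induct on $n$, using Proposition~\ref{EITprop} as the inductive step. The case $n=1$ is trivial. Suppose $n\ge 2$ and each $p\mr{x_i}$ is $M$-invariant. Work in a larger monster $\C^+\succeq\C$ and realize $p$ there by $(a_1,\dots,a_n)$. Set $\abar=(a_1,\dots,a_{n-1})$ and $\bbar=a_n$. Then $\tp(\abar/\C)=p\mr{x_1\dots x_{n-1}}$. The restrictions of this type to single variables are exactly the $p\mr{x_i}$ for $i<n$, so by the inductive hypothesis it is $M$-invariant. Also $\tp(\bbar/\C)=p\mr{x_n}$ is $M$-invariant by assumption. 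Proposition~\ref{EITprop} then gives that $\tp(\abar\bbar/\C)=p$ is $M$-invariant.

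The only points needing care are these:
\begin{enumerate}
\item Proposition~\ref{EITprop} is stated for arbitrary tuples $\abar,\bbar$, so it applies to the $(n-1)$-tuple $\abar$.
\item Realizing $p$ in some $\C^+$ is always possible.
\item The conclusion depends only on $p=\tp(\abar\bbar/\C)$, not on the particular realization chosen.
\end{enumerate}
With these checked, the argument contains no real obstacle. All of the work was done in Proposition~\ref{EITprop}, via EIT applied to the $\Z$-indexed Morley sequence.
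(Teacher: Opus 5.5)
Your proof is correct and matches the paper's argument. The paper also obtains the corollary from Proposition~\ref{EITprop} by induction on $n$, splitting off one coordinate at a time, and the forward direction is the trivial restriction argument you give.
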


Because of this corollary, it is natural to investigate global 1-types.  
%
%
The following characterization of a global type being $M$-definable will be heavily used in what follows.

\begin{Fact}   \label{charMdef}  [$T$ any complete theory]   Suppose $M\preceq\C$.   The following are equivalent for a global type $p(\xbar)\in S_{\xbar}(\C)$.
\begin{enumerate}
\item $p$ is $M$-definable;
\item  $p$ is the unique heir of $p\mr{M}$;
\item  $p$ is $M$-invariant and  $p(\xbar)\otimes q(\ybar)=q(\ybar)\otimes p(\xbar)$ for all global types $q(\ybar)\in S_{\ybar}(\C)$ that are finitely satisfied in $M$.
\end{enumerate}
\end{Fact}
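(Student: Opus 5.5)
The plan is to prove $(1)\Leftrightarrow(2)$ by the standard theory of heirs of types over models, and then $(1)\Leftrightarrow(3)$ by turning statements about $p\otimes q$ and $q\otimes p$ into statements about heirs and coheirs of $p\mr{M}$.

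Throughout, the basic tool is this duality: for $\abar$ realizing $p\mr{M}$ and any $\cbar$, $\tp(\abar/M\cbar)$ is an heir of $p\mr{M}$ if and only if $\tp(\cbar/M\abar)$ is finitely satisfied in $M$. It follows directly from the definitions, since both sides say that any formula $\phi(\abar,\cbar)$ with parameters from $M$ is reflected by some $\phi(\abar,\mbar)$ with $\mbar\in M$.

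\textbf{$(1)\Rightarrow(2)$.} Let $p$ have $M$-definition scheme $d\phi$. Then $p$ is itself an heir of $p\mr{M}$. Now let $p'$ be any heir, and suppose $\phi(\xbar,\cbar)\in p'$. If $\neg d\phi(\cbar)$ held, the heir property applied to $\phi(\xbar,\cbar)\wedge\neg d\phi(\cbar)$ would give $\mbar\in M$ with $\phi(\xbar,\mbar)\in p$ and $\neg d\phi(\mbar)$, which is impossible. Hence $p'=p$.

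\textbf{$(2)\Rightarrow(1)$.} Automorphisms of $\C$ fixing $M$ permute the heirs of $p\mr{M}$, so a unique heir $p$ is $M$-invariant. Fix $\phi(\xbar,\ybar)$ and let $X=\{\cbar:\phi(\xbar,\cbar)\in p\}$. Suppose $\cbar\in X$. If every $L(M)$-formula $\theta(\ybar)\in\tp(\cbar/M)$ met the complement of $X$, then compactness, using the heir criterion with parameters $\cbar$, would produce an heir of $p\mr{M}$ containing $\neg\phi(\xbar,\cbar)$, contradicting uniqueness. So both $X$ and its complement are unions of $M$-definable sets. Compactness then makes $X$ definable over $M$, and that gives the definition scheme.

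\textbf{$(1)\Rightarrow(3)$.} A definable type is invariant. Let $q$ be finitely satisfied in $M$. Take $\bbar\models q$ and $\abar\models p\mr{\C\bbar}$; the type $p\otimes q$ is then computed through $d\phi(\bbar,\cbar)$. For $q\otimes p$, take $\abar\models p$ and $\bbar\models q\mr{\C\abar}$, so $\tp(\bbar/\C\abar)$ is finitely satisfied in $M$. We need $\phi(\abar,\bbar,\cbar)\leftrightarrow d\phi(\bbar,\cbar)$. If this failed, finite satisfiability would give $\mbar\in M$ with the same failure, $\phi(\abar,\mbar,\cbar)\wedge\neg d\phi(\mbar,\cbar)$ or the reverse. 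That contradicts $\abar\models p$.

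\textbf{$(3)\Rightarrow(1)$.} By $(2)\Rightarrow(1)$ it suffices to get uniqueness of heirs. Suppose $p'\neq p$ is another heir of $p\mr{M}$, witnessed by $\phi(\xbar,\cbar)\in p'$ and $\neg\phi(\xbar,\cbar)\in p$. Choose $\abar'\in\C$ realizing $p'\mr{M\cbar}$. By the duality, $\tp(\cbar/M\abar')$ is finitely satisfied in $M$; extend it to a global type $q$ finitely satisfied in $M$. Then $q\otimes p$, evaluated at a realization of $p\mr{M}$ conjugate over $M$ to $\abar'$, contains $\phi(\xbar,\ybar)$. On the other side, any $\bbar\models q$ has $\tp(\bbar/M)=\tp(\cbar/M)$, so $M$-invariance of $p$ puts $\neg\phi(\xbar,\bbar)\in p$, and $p\otimes q$ contains $\neg\phi(\xbar,\ybar)$. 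This contradicts commutation.

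The main obstacle is this last step: choosing realizations so that the computation of $q\otimes p$ really yields the heir $p'$, rather than $p$, over the relevant parameters. The same care is needed in the compactness argument for $(2)\Rightarrow(1)$.
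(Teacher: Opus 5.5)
Your proposal is correct and follows the same route as the paper. The paper cites Lascar--Poizat for $(1)\Leftrightarrow(2)$ and Simon's Lemma~2.23 for $(1)\Rightarrow(3)$, and it obtains $(3)\Rightarrow(1)$ by reducing to uniqueness of heirs (Lemma~2.3 of \cite{dpInv}), exactly as you do. The only points left implicit in your sketch are these: the compactness step in $(2)\Rightarrow(1)$ uses that $p$ itself is an heir, to replace a witness $\cbar'\in\C\setminus X$ satisfying $\theta$ by one in $M$; and the final step uses that $M$ is small in $\C$. Both hold in this setting.
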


\begin{proof}  The equivalence $(1)\Leftrightarrow(2)$ is Theorem~4.2 of \cite{LP}.   $(1)\Rightarrow (3)$ appears as Lemma~2.23 of \cite{PierreBook}, and 
$(3)\Rightarrow (1)$, which appeals to  $(1)\Leftrightarrow(2)$ is proved in Lemma~2.3 of \cite{dpInv}.  
\qed
\end{proof}  

\begin{Definition}  {\em  A small set $C\supseteq M$ is {\em full for $M$} if, for all $n$, every $p(\xbar)\in S_n(M)$ is realized in $C$.
}
\end{Definition}  

The utility of fullness is partially explained by the following fact, which holds for any complete theory.

\begin{Fact}  \label{fullstationary}  [$T$ any complete theory]  Suppose $C\supseteq M$ is full for $M$.
\begin{itemize}
\item  If 
$p=\tp(\abar/C)$ is finitely satisfiable in $M$ then for any $D\supseteq C$, if $q,r\in S(D)$ are both extensions of $p$ that are finitely satisfied in $M$,
then $q=r$.    
\item  If $p,q\in S(\C)$ are global $M$-invariant types, then $p\mr{C}=q\mr{C}$ implies $p=q$.  Moreover,
$p$ is finitely satisfied in $M$ if and only if $p\mr{C}$ is finitely satisfied in $M$.
\end{itemize}
\end{Fact}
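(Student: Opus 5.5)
The first bullet is a uniqueness statement for finitely satisfiable extensions, and I will reduce it to a single formula. Suppose $q,r\in S(D)$ both extend $p=\tp(\abar/C)$ and are finitely satisfied in $M$, but $q\neq r$. Then there is a formula $\phi(\xbar,\ebar)$ with $\ebar$ from $D$ such that $\phi(\xbar,\ebar)\in q$ and $\neg\phi(\xbar,\ebar)\in r$. Let $s=\tp(\ebar/M)$. Because $C$ is full for $M$, there is some $\ebar'\in C$ with $\tp(\ebar'/M)=s$.

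The key step is to compare $\ebar$ with $\ebar'$ via their common type over $M$. I claim that a type $q$ over $D\supseteq M$ that is finitely satisfied in $M$ is determined on formulas by the type over $M$ of the parameter, provided we also control $\tp(\ebar\ebar'/M)$. A naive argument fails, so I will argue more carefully.

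Since $q$ is finitely satisfied in $M$, $q$ extends to a global type $\tilde q$ that is finitely satisfied in $M$, and hence $M$-invariant. Likewise $r$ extends to a global $\tilde r$ that is finitely satisfied in $M$ and $M$-invariant. Since $\ebar\equiv_M\ebar'$, invariance gives $\phi(\xbar,\ebar')\in\tilde q$ and $\neg\phi(\xbar,\ebar')\in\tilde r$. But $\ebar'\in C$, and $\tilde q\mr C=p=\tilde r\mr C$. This is a contradiction, so $q=r$. Note that $M$-invariance holds here because finite satisfiability in the model $M$ implies $M$-invariance.

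For the first part of the second bullet, suppose $p\mr C=q\mr C$ but some $\phi(\xbar,\ebar)$ lies in $p\setminus q$. Fullness gives $\ebar'\in C$ with $\ebar'\equiv_M\ebar$. By $M$-invariance of both $p$ and $q$, we get $\phi(\xbar,\ebar')\in p\setminus q$. This contradicts $p\mr C=q\mr C$.

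For the "moreover" clause, the forward direction is trivial. For the converse, assume $p\mr C$ is finitely satisfied in $M$ and let $\phi(\xbar,\ebar)\in p$. Choose $\ebar'\in C$ with $\ebar'\equiv_M\ebar$. By invariance $\phi(\xbar,\ebar')\in p\mr C$, so some $\mbar\in M$ satisfies $\phi(\mbar,\ebar')$. Since $\ebar\equiv_M\ebar'$, it follows that $\phi(\mbar,\ebar)$ holds. Hence $p$ is finitely satisfied in $M$.

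The only delicate point in all of this is the extension step in the first bullet. There we need that a type over $D$ that is finitely satisfied in $M$ extends to a global type that is still finitely satisfied in $M$. This is the standard ultrafilter argument, extending the family of sets $\phi(M)$ for $\phi\in q$. It goes through because $M$ is a model, and it then yields invariance.
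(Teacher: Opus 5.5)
Your proof is correct. The paper states this as a standard Fact and gives no proof of its own; yours is the usual argument: use fullness to replace the parameter $\ebar$ by some $\ebar'\in C$ with $\ebar'\equiv_M\ebar$, then apply invariance.

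In the first bullet, the detour through global extensions $\tilde q,\tilde r$ is unnecessary, and your worry that ``a naive argument fails'' is unfounded. Since $\ebar'\in C\subseteq D$, both $\ebar$ and $\ebar'$ already lie in $D$. Suppose $\phi(\xbar,\ebar)\in q$ but $\phi(\xbar,\ebar')\notin q$. Then $\phi(\xbar,\ebar)\wedge\neg\phi(\xbar,\ebar')\in q$, and finite satisfiability of $q$ itself gives $\mbar\in M$ with $\phi(\mbar,\ebar)\wedge\neg\phi(\mbar,\ebar')$, contradicting $\ebar\equiv_M\ebar'$. So $q$, and likewise $r$, agrees with $p$ on $\phi(\xbar,\ebar)$. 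This is exactly the proof that $\tilde q$ is $M$-invariant, applied one level down.

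The direct version also avoids a small awkwardness in yours. In the paper's uses of this Fact (e.g.\ Lemma~\ref{orthlemma}(1), with $D=\C\abar$ and $\abar$ from $\C^+$), $D$ need not lie inside $\C$, so ``global'' would have to mean ``over a larger model containing $D$.''

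Finally, neither the ultrafilter extension nor the implication ``finitely satisfiable implies invariant'' uses that $M$ is a model. Nothing in your argument does, so your remark that the extension ``goes through because $M$ is a model'' misplaces the reason.
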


If we assume $T$ is monadically NIP we can say much more.  The following Fact appears as Lemma 3.3(2,3) of \cite{BL1}.

\begin{Fact} \label{3.3}   [$T$ monadically NIP]    
\begin{enumerate}
\item  Suppose $C\supseteq M$ is arbitrary.  If $\tp(a/C)$ is finitely satisfied in $M$ for every $a\in A$, then $\tp(A/C)$ is finitely satisfied in $M$.
\item  
Suppose $C\supseteq M$ is full for $M$. Then for any  non-empty sets $A,B$ $\tp(A/BC)$ is finitely satisfied in $M$ if and only if $\tp(a/Cb)$ is finitely
satisfied in $M$ for every $a\in A$ and $b\in B$.
\end{enumerate}
\end{Fact}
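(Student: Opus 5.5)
We derive both parts from the f.s.\ dichotomy (Fact~\ref{monNIPequiv}(3)), using the elementary transitivity of finite satisfiability. The transitivity principle is as follows: if $\tp(c/C)$ is finitely satisfied in $M$ and $\tp(\abar/Cc)$ is finitely satisfied in $M$, then $\tp(\abar c/C)$ is finitely satisfied in $M$. To see this, take $\phi(\xbar,y,\ebar)$ true of $(\abar,c)$ with $\ebar$ from $C$. Find $\mbar$ from $M$ with $\models\phi(\mbar,c,\ebar)$. Then $\phi(\mbar,y,\ebar)$ is a formula over $C$ satisfied by $c$, so it is satisfied by some $m'\in M$.

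For (1), finite satisfiability is finitary, so we may assume $A=\{a_1,\dots,a_n\}$ is finite and induct on $n$. Suppose $\tp(a_1\dots a_k/C)$ is finitely satisfied in $M$. Apply the f.s.\ dichotomy with base $M$, the set $\{a_1,\dots,a_k\}$, the set $C$ (note $C\supseteq M$), and the singleton $c=a_{k+1}$. Either $\tp(a_1\dots a_{k+1}/C)$ is finitely satisfied in $M$, and we are done, or $\tp(a_1\dots a_k/Ca_{k+1})$ is. In the latter case, transitivity together with the hypothesis that $\tp(a_{k+1}/C)$ is finitely satisfied in $M$ gives the conclusion.

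For (2), the left-to-right direction is pure monotonicity: restrict to a subtuple and to a subset of the parameters. For the converse, part (1) applied over the base $BC$ shows it suffices to prove that $\tp(a/BC)$ is finitely satisfied in $M$ for each single $a\in A$. By finite character we may take $B=\{b_1,\dots,b_m\}$ and induct on $m$. The inductive step requires: if $\tp(a/Cb_{\le k})$ and $\tp(a/Cb_{k+1})$ are both finitely satisfied in $M$, then so is $\tp(a/Cb_{\le k+1})$. Apply the dichotomy to $\{a\}$ over $Cb_{\le k}$ with $c=b_{k+1}$. One outcome is exactly what we want. The other outcome is that $\tp(ab_{k+1}/Cb_{\le k})$ is finitely satisfied in $M$.

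Handling this second outcome is where fullness enters, and it is the main obstacle. We plan to use Fact~\ref{fullstationary}: over a full $C$, an $M$-finitely-satisfied type has a unique $M$-finitely-satisfied extension to any larger set. Choose $a'$ realizing the restriction to $Cb_{\le k+1}$ of a global $M$-finitely-satisfied extension of $\tp(a/Cb_{\le k})$. Then $\tp(a'/Cb_{\le k+1})$ is finitely satisfied in $M$, and $\tp(a'/Cb_{k+1})=\tp(a/Cb_{k+1})$ by stationarity over $C$. The task is then to show $\tp(a'/Cb_{\le k+1})=\tp(a/Cb_{\le k+1})$. For this we would exploit that $\tp(ab_{k+1}/Cb_{\le k})$ is the unique $M$-finitely-satisfied extension of $\tp(ab_{k+1}/C)$. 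The aim is to show $\tp(a'b_{k+1}/Cb_{\le k})$ is also $M$-finitely satisfied, again via transitivity and stationarity, so that the two types coincide.

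If this direct route stalls, the fallback is to apply the dichotomy with the roles reversed. That is, apply it to $\{b_1,\dots,b_k\}$ over $Ca$ with $c=b_{k+1}$, using stationarity over the full set $C$ to transfer finite satisfiability between $\tp(a/Cb)$ and $\tp(b/Ca)$-type statements.
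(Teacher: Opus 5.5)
Your proof is correct. The paper gives no argument for this Fact; it imports it from Lemma~3.3 of \cite{BL1}. Your derivation is a sound self-contained proof built from three ingredients: the f.s.\ dichotomy, transitivity of finite satisfiability over a base containing $M$, and uniqueness of $M$-finitely satisfied extensions over a full set (Fact~\ref{fullstationary}).

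Your ``direct route'' in the second outcome of part (2) does close, so the fallback is unnecessary:
\begin{enumerate}
\item Restricting $\tp(ab_{k+1}/Cb_{\le k})$ shows that $\tp(b_{k+1}/Cb_{\le k})$ is finitely satisfied in $M$. By construction, $\tp(a'/Cb_{\le k+1})$ is finitely satisfied in $M$.
\item Your transitivity principle, applied over the base $Cb_{\le k}\supseteq M$, then makes $\tp(a'b_{k+1}/Cb_{\le k})$ finitely satisfied in $M$.
\item Since $\tp(a'/Cb_{k+1})=\tp(a/Cb_{k+1})$, we get $\tp(a'b_{k+1}/C)=\tp(ab_{k+1}/C)$. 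Fullness of $C$ then forces $\tp(a'b_{k+1}/Cb_{\le k})=\tp(ab_{k+1}/Cb_{\le k})$.
\item Hence $\tp(a/Cb_{\le k+1})=\tp(a'/Cb_{\le k+1})$, which is finitely satisfied in $M$.
\end{enumerate}
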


We combine the previous results assuming $T$ is dp-minimal and has EIT, but these two properties characterize being monadically NIP by Fact~\ref{monNIPequiv}.    Whereas Proposition~\ref{meld}(1) was already known in \cite{BL1}, clause (2) is arguably the most important new ingredient of this paper.

\begin{Proposition}  \label{meld} [$T$ monadically NIP]    
Suppose $M\preceq\C$ is small and let $p(\xbar)\in S_{\xbar}(\C)$ be an $M$-invariant global type.  Let $\xbar=\xbar_1\conc\xbar_2$ be any proper partition
and let $p_1(\xbar_1)=p\mr{\xbar_1}$ and $p_2(\xbar_2)=p\mr{\xbar_2}$ be the restrictions.  
\begin{enumerate}
\item  If both $p_1$ and $p_2$ are finitely satisfied in $M$, then $p$ is finitely satisfied in $M$.
\item  If both $p_1$ and $p_2$ are $M$-definable, then $p$ is $M$-definable.
\item  If both $p_1$ and $p_2$ are  generically stable over $M$, then $p$ is generically stable over $M$.
\end{enumerate}
\end{Proposition}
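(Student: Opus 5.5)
For clause (1), I would realize $p$ by a tuple $\abar=\abar_1\abar_2$ in a larger monster $\C^+\succeq\C$. Every singleton $a\in\abar$ lies in $\abar_1$ or in $\abar_2$. Since $p_1$ and $p_2$ are finitely satisfied in $M$, so is $\tp(a/\C)$, being a restriction of one of them. Fact~\ref{3.3}(1), applied inside $\C^+$ with $C=\C$, then gives that $\tp(\abar/\C)=p$ is finitely satisfied in $M$. Clause (3) is just the conjunction of (1) and (2), so the real work is clause (2).

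For clause (2) I would verify condition (3) of Fact~\ref{charMdef}. By hypothesis $p$ is $M$-invariant, so it remains to show $p\otimes q=q\otimes p$ for every global $q(\ybar)$ finitely satisfied in $M$. Fix such a $q$ and work in $\C^+$. Let $\abar=\abar_1\abar_2$ realize $p$, and let $\bbar$ realize the unique $M$-finitely-satisfied extension $q\mr{\C\abar}$, i.e.\ the global finitely satisfiable type evaluated over $\C\abar$. Then $\tp(\abar\bbar/\C)$ is one of the two products. To identify it with the other product, it suffices to show that $\tp(\abar/\C\bbar)$ is the $M$-invariant extension $p\mr{\C\bbar}$.

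I would organize the argument the other way around, which is more convenient. Start with $\bbar\models q$ and let $\abar=\abar_1\abar_2$ realize the $M$-invariant extension of $p$ to $\C\bbar$. We must show $\tp(\bbar/\C\abar)$ is finitely satisfied in $M$. The fullness of $\C$ for $M$ and the first bullet of Fact~\ref{fullstationary} then force it to equal $q\mr{\C\abar}$, giving the commutation.

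Now $\abar_1$ realizes $p_1\mr{\C\bbar}$. Since $p_1$ is $M$-definable, Fact~\ref{charMdef} gives $p_1\otimes q=q\otimes p_1$, so $\tp(\bbar/\C\abar_1)$ is finitely satisfied in $M$, and likewise $\tp(\bbar/\C\abar_2)$. In particular, for every singleton $b\in\bbar$ and every singleton $a\in\abar$, $\tp(b/\C a)$ is finitely satisfied in $M$. Since $\C$ is full for $M$, Fact~\ref{3.3}(2), applied in $\C^+$ with $C=\C$, $A=\bbar$, $B=\abar$, yields that $\tp(\bbar/\C\abar)$ is finitely satisfied in $M$, as required. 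This application of the monadic NIP fact is the step doing the real gluing.

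The point I expect to need the most care is the bookkeeping in this last part. First, one must check that realizing $p\mr{\C\bbar}$ really makes each $\abar_i$ realize $p_i\mr{\C\bbar}$; this holds because the restriction of the $M$-invariant extension of $p$ is the $M$-invariant extension of the restriction. Second, the product conventions in Fact~\ref{charMdef} must match the order of realization used above. Third, one must justify applying Facts~\ref{3.3} and~\ref{fullstationary} with the monster $\C$ playing the role of a small full set inside $\C^+$. Since $\C$ is saturated, it realizes every type over $M$ and so is full for $M$; hence all three points are routine.
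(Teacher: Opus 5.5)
Your proposal is correct and follows the paper's proof: (1) is Fact~\ref{3.3}(1), and (2) checks Fact~\ref{charMdef}(3) by realizing $\bbar\models q$ first and $\abar$ over it. It then reduces, via fullness and Fact~\ref{fullstationary}, to finite satisfiability of $\tp(\bbar/\C\abar)$, which comes from the commutations $p_\ell\otimes q=q\otimes p_\ell$ glued together by Fact~\ref{3.3}(2).

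The only difference is cosmetic. The paper works over an arbitrary small full set $C\supseteq M$ inside $\C$, and in (1) transfers back to $p$ via the second bullet of Fact~\ref{fullstationary}. You instead use $\C$ itself as the full set inside $\C^+$, as the paper itself does in Lemma~\ref{orthlemma}.
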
  

\begin{proof}  (1)  Assume both $p_1$ and $p_2$ are finitely satisfied in $M$ and choose a small set $C\supseteq M$ that is full for $M$.  
It suffices to show that $\tp(\abar/C)$ is finitely satisfied in $M$ for every/some realization of $p|C$.  Let $\abar=(\abar_1,\abar_2)$ realize $p|C$, where $\tp(\abar_\ell/C)=p_\ell|C$ for $\ell=1,2$.  As both $p_1$ and $p_2$ are finitely satisfied in $M$, $\tp(a/C)$ is finitely satisfied in $M$ for every $a\in\abar$, hence $\tp(\abar/C)$ is finitely satisfied in $M$ by Fact~\ref{3.3}(1).

(2)  We show that $p(\xbar)$ is $M$-definable by employing Fact~\ref{charMdef}(3).  
Choose any global type $q(\ybar)\in S_{\ybar}(\C)$ that is finitely satisfied in $M$.  Towards showing that  $p(\xbar)\otimes q(\ybar)=q(\ybar)\otimes p(\xbar)$, 
choose a full $C\supseteq M$ and let $(\abar,\bbar)$ realize $(p(\xbar)\otimes q(\ybar))|C$.   Showing that $(\abar,\bbar)$ realizes $(q(\ybar)\otimes p(\xbar))|C$ amounts
to showing that $\tp(\bbar/C\abar)=q|C\abar$.    Clearly, $\tp(\bbar/C)=q|C$, so since $C\supseteq M$ is full, by Fact~\ref{fullstationary} it suffices to show that $\tp(\bbar/C\abar)$ is finitely satisfied in $M$.  
Write $\abar=\abar_1\conc\abar_2$, where the tuple $\abar_\ell$ realizes $p_\ell(\xbar_\ell)$
for $\ell=1,2$.  Since $p_\ell(\xbar_\ell)$ is $M$-definable, $p_\ell(\xbar_\ell)\otimes q(\ybar)=q(\ybar)\otimes p_\ell(\xbar_\ell)$ by Fact~\ref{charMdef}(3).   
Thus, $\bbar$ realizes $q|C\abar_\ell$.  In particular, both  $\tp(\bbar/C\abar_1)$ and $\tp(\bbar/C\abar_2)$ are finitely satisfied in $M$.  It follows that $\tp(\bbar/Ca)$ is finitely
satisfied in $M$ for every $a\in \abar$.  Thus, $\tp(\bbar/C\abar)$ is finitely satisfied in $M$ by Fact~\ref{3.3}(2),  As noted above, this implies $\bbar$ realizes $q|C\abar$,
as required.

(3)  This follows immediately from (1) and (2).
\qed
\end{proof}

\begin{Corollary}   \label{coordinates}  [$T$ monadic NIP]   Let $p(x_1,\dots,x_n)$ be a global, $M$-invariant type and, for each $i$, let $p_i=p\mr{x_i}$ be the restriction of $p$ to formulas whose only free variable is $x_i$.  
Then:
\begin{enumerate}
\item  $p(x_1,\dots,x_n)$ is finitely satisfied in $M$ if and only if each $p_i(x_i)$ is finitely satisfied in $M$.
\item  $p(x_1,\dots,x_n)$ is  $M$-definable if and only if each $p_i(x_i)$ is $M$-definable.
\item  $p(x_1,\dots,x_n)$ is generically stable over $M$ if and only if each $p_i(x_i)$ is generically stable over $M$.
\end{enumerate}
\end{Corollary}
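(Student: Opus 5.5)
The plan is to argue by induction on $n$, with Proposition~\ref{meld} doing all the real work. For $n=1$ there is nothing to prove. The left-to-right directions of (1), (2) and (3) hold in any theory, because each property passes to restrictions of the variables.

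For finite satisfiability: if $\phi(x_i,\cbar)\in p_i$, then $\phi(x_i,\cbar)\in p$. So it is satisfied by some $\mbar\in M^n$, and its $i$-th coordinate lies in $M$.

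For $M$-definability: the $\phi(x_i,\ybar)$-definition of $p_i$ is just the $\phi(x_i,\ybar)$-definition of $p$, viewing $\phi$ as a formula in $x_1,\dots,x_n$ with dummy variables.

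Invariance of $p_i$ over $M$ is likewise immediate from that of $p$. So each $p_i$ is a global $M$-invariant type, and generic stability of the $p_i$ follows by combining the two previous observations.

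For the right-to-left directions, suppose $n\ge 2$ and each $p_i$ has the property in question. Partition the variables as $\xbar_1=(x_1,\dots,x_{n-1})$ and $\xbar_2=(x_n)$, and write $q_1=p\mr{\xbar_1}$ and $q_2=p\mr{\xbar_2}=p_n$. The type $q_1$ is again a global $M$-invariant type, as noted above. Its one-variable restrictions are exactly $p_1,\dots,p_{n-1}$, so by the induction hypothesis $q_1$ has the property. The type $q_2=p_n$ has it by assumption. Since $p$ is $M$-invariant and $\xbar_1\conc\xbar_2$ is a proper partition, Proposition~\ref{meld}(1), (2), or (3), respectively, yields that $p$ has the property.

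The only point needing care is keeping the $M$-invariance hypothesis available at each stage of the induction. This is why I would note explicitly that restrictions of $M$-invariant types are $M$-invariant. Item (3) can also be derived directly from (1) and (2), and I expect no step to be a genuine obstacle.
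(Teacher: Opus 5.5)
Your proof is correct and is essentially the argument the paper intends: the paper gives no separate proof, treating the corollary as an immediate consequence of Proposition~\ref{meld} by induction on $n$, with the left-to-right directions trivial. You also point out explicitly that restrictions of $M$-invariant types remain $M$-invariant, which is what lets you apply the induction hypothesis and Proposition~\ref{meld}(2); the latter goes through Fact~\ref{charMdef}(3) and so needs $M$-invariance.
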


Now, in light of Corollary~\ref{iterateMinv} these are not all of the $M$-invariant global types.   In particular, there can be `hybrids' among these three species of $M$-invariant global 1-types.   Theorem~\ref{biginvariant} below shows that we can decompose any $M$-invariant $n$-type into its `components.'

\begin{Definition}  {\em  We say global types $q(\xbar),r(\ybar)$ are {\em orthogonal}, $q\perp r$, if $q(\xbar)\cup r(\ybar)$ generates a complete type in $S_{\xbar,\ybar}(\C)$.    Say three global types $q(\xbar),r(\ybar),t(\zbar)$
are {\em triply orthogonal} if $q(\xbar)\cup r(\ybar)\cup t(\zbar)$ generates a complete type in $S_{\xbar,\ybar,\zbar}(\C)$.
}
\end{Definition}

That is, triple orthogonality means there is only one choice for $\tp(\abar\bbar\cbar/\C)$ whenever
$\abar,\bbar,\cbar$ realize $q,r,t$, respectively.

\begin{Lemma}  \label{orthlemma}  [$T$ monadic NIP]  Suppose $q(\xbar), r(\ybar)$ are $M$-invariant with $r(\ybar)$ finitely satisfied in $M$. Let $\C^+\succeq\C$ be $|\C|^+$-saturated.  
\begin{enumerate}
\item   Suppose $\abar,\bbar$ are from $\C^+$, $\abar$ realizes $q(\xbar)$, $\bbar$ realizes $r(\ybar)$, and $\tp(\bbar/\C\abar)$ is finitely satisfied in $M$.  
Then $\tp(\abar\bbar/\C)=r(\ybar)\otimes q(\xbar)$.
\item  $q\perp r$ if and only if $\tp(\bbar/\C\abar)$ is finitely satisfied in $M$ for every $\abar,\bbar$ from $\C^+$ with $\abar$
realizing $q(\xbar)$ and $\bbar$ realizing $r(\ybar)$.
\item  $q\perp r$ if and only if $q\perp r\mr{y_i}$ for every $y_i\in\ybar$.
\end{enumerate}
\end{Lemma}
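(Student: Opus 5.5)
For (1), I will compute the type $\tp(\abar\bbar/\C)$ directly. By the convention of the paper, $r(\ybar)\otimes q(\xbar)$ is realized by $(\abar',\bbar')$ with $\abar'$ realizing $q$ and $\bbar'$ realizing $r|\C\abar'$, the unique $M$-invariant extension of $r$ to $\C\abar'$. Since $r$ is finitely satisfied in $M$, $r|\C\abar'$ is its finitely satisfied extension.

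So it suffices to show that $\tp(\bbar/\C\abar)$ equals the $M$-invariant extension $r|\C\abar$. Uniqueness of finitely satisfied extensions is available by the first bullet of Fact~\ref{fullstationary}. The set $\C$ contains an $N\supseteq M$ that is full for $M$, because $\C$ is a monster and $M$ is small. Now $\tp(\bbar/\C)=r$ is finitely satisfied in $M$. Two extensions of $r\mr{N}$ to $\C\abar$ that are both finitely satisfied in $M$ must coincide. These are $\tp(\bbar/\C\abar)$ (by hypothesis) and $r|\C\abar$ (which is finitely satisfied in $M$, being the canonical heir-coheir extension). Hence $\tp(\abar\bbar/\C)=r\otimes q$.

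For (2), take the direction ($\Leftarrow$) first. By (1), every pair $\abar,\bbar$ realizing $q,r$ has $\tp(\abar\bbar/\C)=r\otimes q$, so $q\cup r$ generates a complete type and $q\perp r$. For ($\Rightarrow$), $q\perp r$ means the only completion of $q\cup r$ is $r\otimes q$. Any realization $(\abar,\bbar)$ then realizes $r\otimes q$. Since $\C^+$ is $|\C|^+$-saturated, I can pick $\bbar'$ realizing $r|\C\abar$. Both $\abar\bbar$ and $\abar\bbar'$ have the same type over $\C$, so $\tp(\bbar/\C\abar)=\tp(\bbar'/\C\abar)$, which is finitely satisfied in $M$.

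For (3), ($\Rightarrow$) is immediate: a completion of $q\cup r\mr{y_i}$ extends to a completion of $q\cup r$, and restricting to $\xbar y_i$ recovers uniqueness. For ($\Leftarrow$), let $\abar$ realize $q$ and $\bbar$ realize $r$. Each $r\mr{y_i}$ is $M$-invariant and finitely satisfied in $M$, so by (2) applied to $q\perp r\mr{y_i}$, each $\tp(b_i/\C\abar)$ is finitely satisfied in $M$. Fact~\ref{3.3}(1), applied over the base $\C\abar\supseteq M$, then gives that $\tp(\bbar/\C\abar)$ is finitely satisfied in $M$. Hence $q\perp r$ by (2).

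The main obstacle is the uniqueness step in (1). I have to make sure the fact that global finitely satisfied extensions over a set containing a full $N$ are unique applies with base $\C\abar$, where $\abar$ lies outside $\C$. The first bullet of Fact~\ref{fullstationary} allows any $D\supseteq N$, so taking $D=\C\abar$ handles it, provided $r|\C\abar$ is indeed finitely satisfied in $M$.
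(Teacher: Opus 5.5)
Your proposal is correct and follows the paper's proof essentially step for step. Part (1) uses uniqueness of finitely satisfied extensions over a set full for $M$; your small full $N\subseteq\C$ is just a more careful version of the paper's ``$\C$ is full for $M$''. Part (2) unpacks $r\otimes q$ in one direction and invokes (1) in the other, and part (3) combines (2) with Fact~\ref{3.3}(1).

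One small terminological point: $r|\C\abar$ is finitely satisfied in $M$ because it is the $M$-invariant (coheir) extension of a type finitely satisfied in $M$. No heir property is involved.
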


\begin{proof}  (1)  It suffices to show that $\tp(\bbar/\C\abar)=r|\C\abar$, the canonical invariant extension of $r$.  Both sides extend $r$ and are finitely satisfied in $M$, so equality follows  from Fact~\ref{fullstationary} since $\C$
is full  for $M$.  

(2)  First, suppose $q\perp r$ and choose any $\abar,\bbar$ as in the hypotheses.  
One possibility for $\tp(\abar\bbar/\C)$ is $r(\ybar)\otimes q(\xbar)$,
so we must have $\tp(\abar\bbar/\C)=r(\ybar)\otimes q(\xbar)$ by the orthogonality.  Unpacking the definition gives $\tp(\bbar/\C\abar)=r|\C\abar$.    But, as $r$ is finitely satisfied in $M$, so is $\tp(\bbar/\C\abar)$.
 
 Conversely, suppose $\abar$ is any realization of $q(\xbar)$ and $\bbar$ is any realization of $r(\ybar)$.  From our assumption, (1) implies that $\tp(\abar\bbar/\C)=r(\ybar)\otimes q(\xbar)$.  
 
 (3)  Left to right is trivial, so assume $q\perp r\mr{y_i}$ for every $y_i\in\ybar$.  
Choose any $\abar$ realizing $q(\xbar)$ and any $\bbar$ realizing $r$.   As $q\perp r\mr{y_i}$ for every $y_i\in \ybar$, it follows from (2) that $\tp(b_i/\C\abar)$ is
finitely satisfied in $M$ for every $b_i\in \bbar$.  Thus,   $\tp(\bbar/\C\abar)$ is finitely satisfied in $M$ by Fact~\ref{3.3}(1).  
Thus, $q\perp r$ by (2).
\qed
\end{proof}

\begin{Proposition}  \label{orthprop}  [$T$ monadically NIP]  Suppose $q(\xbar),r(\ybar)$ are $M$-invariant with $r(\ybar)$ finitely satisfied in $M$.
\begin{enumerate}
\item  If $q(\xbar)$ is $M$-definable, but no $r\mr{y_i}$ is generically stable over $M$, then $q\perp r$.
\item  If $q\mr{x_i}$ is not finitely satisfiable in $M$ for any $x_i\in\xbar$, then $q\perp r$.
\end{enumerate}
\end{Proposition}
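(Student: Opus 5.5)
By Lemma~\ref{orthlemma}(3) it suffices to prove both clauses when $r$ is a $1$-type $r(y)$: each $r\mr{y_i}$ is again $M$-invariant and finitely satisfied in $M$, and the hypotheses on $r$ pass to the coordinates. By Lemma~\ref{orthlemma}(2), $q\perp r$ then amounts to the following. For all $\abar$ realizing $q$ and $b$ realizing $r$ in $\C^+$, the type $\tp(b/\C\abar)$ is finitely satisfied in $M$.

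Since finite satisfiability is finitary and $\C$ is full for $M$, I will add the coordinates of $\abar=(a_1,\dots,a_n)$ one at a time using the f.s.\ dichotomy (Fact~\ref{monNIPequiv}). Start from the fact that $\tp(b/\C)=r$ is finitely satisfied in $M$. At stage $k$ assume $\tp(b/\C a_1\dots a_{k-1})$ is finitely satisfied in $M$. The dichotomy, applied with $A=\{b\}$, parameter set $\C a_{<k}$ and new singleton $c=a_k$, gives two outcomes:
\begin{itemize}
\item $\tp(b/\C a_{\le k})$ is finitely satisfied in $M$, which is the good case and lets the induction continue; or
\item $\tp(b a_k/\C a_{<k})$ is finitely satisfied in $M$, which is the bad case.
\end{itemize}
It is enough to rule out the bad case, or to reduce it to the good one. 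Alternatively, Fact~\ref{3.3}(2) lets me treat each $a_k$ separately: it suffices that $\tp(b/\C a_k)$ is finitely satisfied in $M$ for each $k$. So I may take $n=1$ throughout and work with a single coordinate $a$ realizing $q\mr{x_k}$.

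\textbf{Clause (2).} In the bad case $\tp(ba/\C)$ is finitely satisfied in $M$. Restricting to the variable $x$ shows that $\tp(a/\C)=q\mr{x_k}$ is finitely satisfied in $M$, contradicting the hypothesis. So only the good case occurs, and $q\perp r$.

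\textbf{Clause (1).} Here $q\mr{x_k}$ is $M$-definable by Corollary~\ref{coordinates}(2). By Fact~\ref{dptrichotomy}, $r$ is finitely satisfied in $M$ but not $M$-definable. In the bad case $\tp(a/\C)$ is finitely satisfied in $M$, so $q\mr{x_k}$ is generically stable over $M$. Now $\tp(ab/\C)$ is an $M$-invariant global type by Proposition~\ref{EITprop}, and it is finitely satisfied in $M$. I will compare it with $r\otimes q\mr{x_k}$, which is also finitely satisfied in $M$. By Fact~\ref{charMdef}(3), $r\otimes q\mr{x_k}=q\mr{x_k}\otimes r$, because $q\mr{x_k}$ is $M$-definable and $r$ is finitely satisfied.

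The goal is to show $\tp(b/\C a)=r|\C a$. For this I use the first bullet of Fact~\ref{fullstationary}, with full set $\C$ and $D=\C a$. The input I need is that $\tp(b/\C a)$ is finitely satisfied in $M$. Deriving this from the finite satisfiability of the pair $\tp(ab/\C)$ is the main obstacle, since a pair type being finitely satisfied does not obviously make the fibre over $a$ finitely satisfied.

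My first attempt at this step uses the symmetry of generically stable types. Since $q\mr{x_k}$ is generically stable over $M$, it commutes with every $M$-invariant type. The plan is to show that if $\tp(b/\C a)\ne r|\C a$, then $\tp(a/\C b)$ is a non-heir extension of $q\mr{x_k}$. That contradicts Fact~\ref{charMdef}(2), unless the dependence between $a$ and $b$ forces $r$ to become $M$-definable.

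Should this fail, a fallback is to use the non-generic-stability of $r$ directly. Build an $M$-indiscernible Morley sequence in $r$ over $\C$, which is not totally indiscernible. Then apply EIT together with the invariance of $q\mr{x_k}$, as in the proof of Proposition~\ref{EITprop}, to see that $a$ cannot distinguish $b$ from other realizations in a way incompatible with finite satisfiability.
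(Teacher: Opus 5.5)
Your clause (2) is correct and matches the paper's argument: in the bad branch of the f.s.\ dichotomy, $\tp(a_k/\C)$ would be finitely satisfied in $M$. Your reductions are also fine: to a $1$-type $r(y)$ via Lemma~\ref{orthlemma}(3), and to a single coordinate $a$ via Fact~\ref{3.3}(2).

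Clause (1), however, is not proved. Once you split into cases, everything rests on the bad case. There $q'=q\mr{x_k}$ is generically stable over $M$ and $\tp(ab/\C)$ is finitely satisfied in $M$, and you must still show that $\tp(b/\C a)$ is finitely satisfied in $M$, i.e.\ that the good case holds after all.

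Your first attempt does not achieve this. Commutation $q'\otimes r=r\otimes q'$ does show that $\tp(b/\C a)\neq r|\C a$ forces $a\not\models q'|\C b$. So $\tp(a/\C b)$ is not the $M$-definable (heir) extension of $q'$. But this contradicts nothing: Fact~\ref{charMdef}(2) says only that the heir is unique, not that every extension is an heir. All the content sits in your escape clause ``unless the dependence forces $r$ to become $M$-definable,'' and that is exactly the statement to be proved. The fallback is not yet an argument. It does not say which sequence is indiscernible over what, or where a contradiction arises. EIT used as in Proposition~\ref{EITprop} only gives $M$-invariance of $\tp(ab/\C)$, which you already have.

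An example shows that the non-generic-stability of $r$ must be used substantively. Take the theory of an infinite set with equality, let $r=q'$ be the global non-realized $1$-type (generically stable over $M$), and let $b=a$. Then $\tp(aa/\C)$ is finitely satisfied in $M$, while $\tp(b/\C a)$ contains $y=a$ and so is not. Thus the bad configuration really occurs once $r$ is allowed to be generically stable.

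The paper supplies the missing ingredient through distality:
\begin{enumerate}
\item in a dp-minimal theory, a global $1$-type finitely satisfied in $M$ but not generically stable is distal (9.13 of \cite{PierreBook});
\item a distal type is orthogonal to any invariant type it commutes with (9.11 of \cite{PierreBook});
\item $q$ is $M$-definable, so it commutes with $r$ by Fact~\ref{charMdef}(3).
\end{enumerate}
Together these give $q\perp r$ directly, with no case split. Your fallback gestures at this step: turning the failure of total indiscernibility of Morley sequences in $r$ into orthogonality with a commuting type. Without that result, or an actual proof of it, clause (1) remains open.
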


\noindent The two proofs are rather different.

\begin{proof}  (1)  In light of Lemma~\ref{orthlemma}(3), we may assume $\lg(\ybar)=1$, i.e., $r(y)$ is a global 1-type, finitely satisfiable in $M$, but not generically stable over $M$.  
By 9.13 of \cite{PierreBook}, $r(y)$ is distal.   As $q(\xbar)$ is  $M$-definable, by
Fact~\ref{charMdef}(3) $q$ commutes with every finitely satisfied global type over $M$.  In particular, $q$ and $r$ commute.  Thus, $q\perp r$ by 9.11 of \cite{PierreBook}.  

(2)  Choose (in any elementary extension of $\C$) $\abar$ realizing $q(\xbar)$ and $\bbar$ realizing $r(\ybar)$.   

\medskip
\noindent{\bf Claim.}  $\tp(\bbar/\C\abar)$ is finitely satisfied in $M$.

\begin{proof}  Write $\abar=(a_i:i<n)$ and, for each $i<n$, let $\abar_{<i}=(a_j:j<i)$.  
We argue by induction on $i\le n$ that $\tp(\bbar/\C\abar_{<i})$ is finitely satisfied in $M$, which suffices to prove the Claim.
By assumption $\tp(\bbar/\C)=r$, hence it is finitely satisfied in $M$, so this holds for $i=0$.  Now fix $i<n$
and assume $\tp(\bbar/\C\abar_{<i})$ is finitely satisfied in $M$.   We apply the f.s.\ dichotomy to this type and the element $a_i$.
By our assumption on $q$, $\tp(a_i/\C)$ is not finitely satisfied in $M$, hence neither is $\tp(\bbar a_i/\C\abar_{<i})$.   Thus, the f.s.\ dichotomy implies
$\tp(\bbar/\C\abar_{<i} a_i)$ is finitely satisfied in $M$, completing the inductive step.
\qed
\end{proof}

Given the Claim, $q\perp r$ follows immediately from Lemma~\ref{orthlemma}(2).
\qed
\end{proof}


\begin{Theorem} \label{biginvariant}   [$T$ monadic NIP]   Let $p(x_1,\dots,x_n)$ be any $M$-invariant global type.   Partition $[n]=F\sqcup G\sqcup D$ (some of these might be empty)
where
\begin{itemize}
\item  $F=\{i\in [n]:p_i$ is finitely satisfied in $M$, but is not $M$-definable$\}$;
\item  $G=\{i\in[n]:p_i$ is generically stable over $M\}$; and
\item $D=\{i\in[n]:p_i$ is $M$-definable but not finitely satisfied in $M\}$.
\end{itemize}
Let $p_F(\xbar_F)$, $p_G(\xbar_G)$, and $p_D(\xbar_D)$ be the restrictions of $p$ to each of the three subsequences $\xbar_F$, $\xbar_G$, $\xbar_D$ of $\xbar$.
Then 
\begin{enumerate}
\item  $p_F(\xbar_F)$ is finitely satisfied in $M$ but is not $M$-definable if $F\neq\emptyset$;
\item  $p_G(\xbar_G)$ is generically stable over $M$; and
\item  $p_D(\xbar_D)$ is $M$-definable but not finitely satisfied in $M$ if $D\neq\emptyset$.
\item  The three types $p_F$, $p_G$, $p_D$ are triply orthogonal, i.e.,  $p_F\cup p_G\cup p_D\models p$.
\end{enumerate}
\end{Theorem}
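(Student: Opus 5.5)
Items (1)--(3) follow directly from Corollary~\ref{coordinates}, applied to the restrictions $p_F$, $p_G$, $p_D$. Each restriction is again an $M$-invariant global type, and its coordinate restrictions are exactly the $p_i$ for $i$ in the relevant index set. For $p_F$, every $p_i$ with $i\in F$ is finitely satisfied in $M$, so $p_F$ is finitely satisfied in $M$ by Corollary~\ref{coordinates}(1). If $p_F$ were $M$-definable, then by Corollary~\ref{coordinates}(2) every $p_i$, $i\in F$, would be $M$-definable, which is impossible when $F\neq\emptyset$. Item (2) is Corollary~\ref{coordinates}(3). Item (3) is symmetric to (1), using Corollary~\ref{coordinates}(2) for definability and (1) for the failure of finite satisfiability.

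For (4), I would establish triple orthogonality in two steps: first $p_G\perp p_F$, and then $p_D\perp p_{FG}$, where $p_{FG}:=p\mr{\xbar_F\xbar_G}$. Then any realizations $\abar_F,\abar_G,\abar_D$ of the three types satisfy $\tp(\abar_F\abar_G/\C)=p_{FG}$ by the first step, so $\tp(\abar_F\abar_G\abar_D/\C)=p$ by the second step, which gives $p_F\cup p_G\cup p_D\models p$.

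First step: apply Proposition~\ref{orthprop}(1) with $q=p_G$, which is $M$-definable by (2), and $r=p_F$, which is finitely satisfied in $M$ by (1). No coordinate $p_i$, $i\in F$, is generically stable over $M$, so $p_G\perp p_F$. If $F$ or $G$ is empty, this step is vacuous.

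Second step: apply Proposition~\ref{orthprop}(2) with $q=p_D$ and $r=p_{FG}$. By Corollary~\ref{coordinates}(1), $r$ is finitely satisfied in $M$, since every coordinate in $F\cup G$ is. Moreover, no coordinate $p_i$, $i\in D$, is finitely satisfiable in $M$. Hence $p_D\perp p_{FG}$.

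The main point requiring care is that orthogonality, as defined, says $q\cup r$ generates a complete type; I need that complete type to be the restriction of $p$. That is automatic: $p\mr{\xbar\ybar}$ extends $q\cup r$, so it is the unique completion. With that observation the assembly goes through, and the only real content is supplied by Proposition~\ref{orthprop}.
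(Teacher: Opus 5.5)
Your proposal is correct and follows the paper's proof essentially verbatim: items (1)--(3) come from Corollary~\ref{coordinates}, then Proposition~\ref{orthprop}(1) gives $p_F\perp p_G$, and Proposition~\ref{orthprop}(2) is applied to $p_D$ and the finitely satisfied type on $\xbar_F\xbar_G$. The only difference is cosmetic: you identify the unique completions directly as restrictions of $p$, whereas the paper names them as the products $p_F\otimes p_G$ and $p_F\otimes p_G\otimes p_D$ and invokes associativity.
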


\begin{proof}  (1), (2), (3) follow immediately from Corollary~\ref{coordinates}.  
(4)  Choose any realizations (in any elementary extension of $\C$)  $\abar,\bbar,\cbar$ of $p_F,p_G,p_D$, respectively.
By Proposition~\ref{orthprop}(1), $p_F\perp p_G$, so $(\abar,\bbar)$ must realize $p_F(\xbar_F)\otimes p_G(\xbar_G)$.  Let $r(\xbar_F,\xbar_G)$ denote this product type.  The type $r(\xbar,\ybar)$ is finitely satisfied in $M$ as both factors are.
Then by Proposition~\ref{orthprop}(2) applied to $p_D(\xbar_D)$ and $r(\xbar_F,\xbar_G)$, we get that
$(\abar,\bbar,\cbar)$ realizes $r(\xbar_F,\xbar_G)\otimes p_D(\xbar_D)$.  Thus, by associativity, $\tp(\abar\bbar\cbar/\C)=p_F\otimes p_G\otimes p_D$.
Finally, let $\dbar$ be any realization of $p$.   Partition  $\dbar=\abar\bbar\cbar$ according to the partition of $[n]$ described above.  
Then from above, $\tp(\dbar/\C)=p_F\otimes p_G\otimes p_D$.  As $\dbar$ is an arbitrary realization of $p$, $p=p_F(\xbar_F)\otimes p_G(\xbar_G)\otimes p_D(\xbar_D)$.
\qed
\end{proof}

\section{From global types to non-forking over small models}  \label{QQQ}   

\medskip\centerline{{\bf Throughout this section, $T$ is monadically NIP, $M\preceq\C$ is small and $C\subseteq\C$ is small.}}

\medskip

The classification above is fine, but we need to drop this down to analyze $\tp(\abar/MC)$ when it does not fork over $M$.   The complication is that {\em a priori,} such a type may have many different $M$-invariant global extensions.   Because of this, the orthogonality results described in the section above do not directly apply to types over $MC$.

\begin{Lemma}  \label{overM}  Suppose $\abar=(a_1,\dots,a_n)$ and $\tp(a_i/M)$ is $M$-definable for each $i$.  Then $\tp(\abar/M)$ is also $M$-definable.
\end{Lemma}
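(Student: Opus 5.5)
The plan is to reduce the statement to Proposition~\ref{meld}(2), passing through global heirs by way of Fact~\ref{charMdef}.

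Since $\tp(a_i/M)$ is $M$-definable, let $p_i(x_i)\in S_1(\C)$ be its definable extension, i.e.\ the global type given by the same defining schema. By Fact~\ref{charMdef}, $p_i$ is the unique heir of $\tp(a_i/M)$. In particular $p_i$ is $M$-definable, and therefore $M$-invariant.

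Next I would produce one global $M$-invariant type whose restriction to $M$ is $\tp(\abar/M)$ and whose coordinate restrictions are the $p_i$. Take any global heir $p(\xbar)$ of $\tp(\abar/M)$; heirs of types over models always exist. Any global heir is $M$-invariant: if $\cbar\equiv_M\cbar'$ and $\phi(\xbar,\cbar)\wedge\neg\phi(\xbar,\cbar')\in p$, then the formula $\exists \zbar\,(\tp(\zbar/M)\hbox{-small formula}\ldots)$ route does not directly work. So instead I would take $p$ to be a global \emph{coheir} of an $M$-definable type, or, more simply, argue invariance directly. The restriction $p\mr{x_i}$ is an heir of $\tp(a_i/M)$, so by uniqueness in Fact~\ref{charMdef}(2) it equals $p_i$.

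To get $M$-invariance of $p$, I would use an alternative choice: realize the $p_i$ successively and take the product $p:=p_1\otimes\cdots\otimes p_n$. This is certainly $M$-invariant and its restrictions are the $p_i$. However, $p\mr M$ need not equal $\tp(\abar/M)$, so this choice is problematic. The better route is to work in a theory where heirs are invariant. Here I would use NIP: by a standard fact, in an NIP theory every global heir of a type over a model $M$ is $M$-invariant. Indeed, if $p$ splits over $M$ via $\cbar\equiv_M\cbar'$, then heir arguments yield a formula witnessing IP along an indiscernible sequence. Failing that, one can apply Corollary~\ref{iterateMinv} (EIT) directly: $p$ is $M$-invariant as soon as each $p\mr{x_i}=p_i$ is, which holds. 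This is the clean step. Corollary~\ref{iterateMinv} needs only that the coordinate restrictions are $M$-invariant, and uniqueness of heirs has already identified them with the $M$-definable $p_i$.

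Given this, Corollary~\ref{coordinates}(2) (iterated Proposition~\ref{meld}(2)) shows that $p$ is $M$-definable, and its restriction $p\mr M=\tp(\abar/M)$ is then $M$-definable, as required. The main obstacle I expect is verifying that the heir's coordinate restrictions are exactly the $p_i$, i.e.\ that a restriction of an heir is an heir. This is immediate, since any formula $\phi(x_i,\cbar)\in p$ is also in $p$ as a formula in $\xbar$, so the heir property transfers. The uniqueness clause of Fact~\ref{charMdef} then does the rest.
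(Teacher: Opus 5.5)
Your final argument is exactly the paper's proof: take any global heir of $\tp(\abar/M)$, identify each coordinate restriction with the $M$-definable extension of $\tp(a_i/M)$ by uniqueness of heirs (Fact~\ref{charMdef}), obtain $M$-invariance from Corollary~\ref{iterateMinv} and then $M$-definability from Corollary~\ref{coordinates}(2), and restrict to $M$. You should delete the detours, though: the ``standard fact'' that in NIP theories every global heir of a type over $M$ is $M$-invariant is false, and your proof never uses it. For a counterexample, let $c_1<c_2$ in $\C$ both realize the type of $\sqrt{2}$ over $M=(\mathbb{Q},<)$; any global extension of that type containing $c_1<x<c_2$ is an heir, but it is not $M$-invariant because $c_1\equiv_M c_2$.
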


\begin{proof}  Let $r(\xbar)=\tp(\abar/M)$ and let $R(\xbar)\in S(\C)$ be any global heir extension of $r$.   Note that each restriction $R\mr{x_i}$ is an heir extension of $\tp(a_i/M)$, which we assumed to be $M$-definable.   Thus, by Fact~\ref{charMdef}, $R\mr{x_i}$ must be the unique $M$-definable extension of $\tp(a_i/M)$ to $S_1(\C)$.   In particular, 
each $R\mr{x_i}$ is $M$-invariant.  Thus, by Corollary~\ref{iterateMinv}, $R(x_1,\dots,x_n)$ is also $M$-invariant.  It follows from Corollary~\ref{coordinates}(2) that $R(x_1,\dots,x_n)$ is $M$-definable as well.  But, as $R$ was chosen to extend $\tp(\abar/M)$, the latter is $M$-definable as well.
\qed
\end{proof}

It is somewhat surprising that the following Lemma requires an argument, as opposed to following directly from Lemma~\ref{overM}.  The issue is that, even in monadically NIP theories, there can be many different extensions of an $M$-definable $p\in S(M)$ to a larger base set.   (One example is DLO, where $p(x)\in S(M)$ is the type of being `infinitely large.')  Before stating the Lemma, we note an easy fact, which is proved simply by composing the definitions.

\begin{Fact}  \label{compose}   [$T$ any complete theory]  Suppose $M\preceq\C$,  and $C,\abar,\bbar$ are from $\C$.  If both $\tp(\bbar/MC)$ and $\tp(\abar/MC\bbar)$ are $M$-definable, then $\tp(\abar\bbar/MC)$ is $M$-definable as well.
\end{Fact}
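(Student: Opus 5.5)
The plan is to compose the defining schemes directly. Work in $\C$ with $M\preceq\C$. The hypothesis that $\tp(\bbar/MC)$ is $M$-definable gives, for each formula $\psi(\ybar,\zbar)$ over $\emptyset$, an $L(M)$-formula $d_\psi(\zbar)$ such that for every $\cbar$ from $MC$,
$$\models\psi(\bbar,\cbar)\iff\models d_\psi(\cbar).$$
Similarly, the hypothesis on $\tp(\abar/MC\bbar)$ gives, for each $\phi(\xbar,\ybar,\zbar)$, an $L(M)$-formula $d'_\phi(\ybar,\zbar)$ such that for all $\cbar$ from $MC$ (and the fixed $\bbar$),
$$\models\phi(\abar,\bbar,\cbar)\iff\models d'_\phi(\bbar,\cbar).$$
Here I take the parameter variables of the formulas to range over $MC\bbar$, with $\bbar$ singled out. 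If the given scheme only speaks of arbitrary tuples from $MC\bbar$, I first add dummy variables so that each such tuple is displayed as $(\bbar',\cbar)$ with $\bbar'$ a subtuple of $\bbar$. I then absorb the subtuple into the $\ybar$-slot by a formula in which the $\ybar$-variables occur only on the coordinates used.

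Now let $\theta(\xbar,\ybar,\zbar)$ be any formula. We want an $L(M)$-formula $e_\theta(\zbar)$ defining $\{\cbar\in MC:\models\theta(\abar,\bbar,\cbar)\}$. Put $\chi(\ybar,\zbar):=d'_\theta(\ybar,\zbar)$. This is an $L(M)$-formula, say $\chi(\ybar,\zbar,\mbar)$ with $\mbar\in M$. Apply the definability of $\tp(\bbar/MC)$ to the formula $\chi(\ybar,\zbar,\wbar)$, with parameter tuple $(\cbar,\mbar)$ from $MC$. This yields $d_\chi(\zbar,\wbar)\in L(M)$, and we set $e_\theta(\zbar):=d_\chi(\zbar,\mbar)$. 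Then for $\cbar\in MC$,
$$\models\theta(\abar,\bbar,\cbar)\iff\models d'_\theta(\bbar,\cbar)\iff\models d_\chi(\cbar,\mbar),$$
which is exactly the required definition. Since the formula $\theta$ in the variables $\xbar\ybar$ was arbitrary, $\tp(\abar\bbar/MC)$ is $M$-definable.

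The only point needing care is the bookkeeping just described: the parameters from $M$ occurring in $d'_\theta$ must be folded into the parameter tuple, which is legitimate because $M\subseteq MC$. I do not expect a real obstacle.
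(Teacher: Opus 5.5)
Your proposal is correct and is essentially the paper's own argument: compose the $M$-definition of $\phi(\xbar,\ybar,\zbar)$ in $\tp(\abar/MC\bbar)$ with the $M$-definition, in $\tp(\bbar/MC)$, of the resulting $L(M)$-formula. Your folding of the $M$-parameters of $d'_\theta$ into the parameter tuple is exactly what the paper leaves implicit, and the dummy-variable worry is unnecessary because $(\bbar,\cbar)$ is itself a tuple from $MC\bbar$.
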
  

\begin{proof}  Choose any $\phi(\xbar,\ybar,\zbar)$.  By the $M$-definability of $\tp(\abar/MC\bbar)$, there is $\theta(\ybar,\zbar)\in L(M)$ such that, for every $\cbar\in MC$, $$\phi(\abar,\bbar,\cbar)\quad\Longleftrightarrow\quad \theta(\bbar,\cbar).$$
By the $M$-definability of $\tp(\bbar/MC)$ there is $\delta(\zbar)$ such that $\theta(\bbar,\cbar)\Leftrightarrow\delta(\cbar)$.
This $\delta(\zbar)\in L(M)$ is an $M$-definition of $\phi(\xbar,\ybar;\zbar)$ in $\tp(\abar\bbar/MC)$.
\end{proof}

\begin{Lemma}  \label{overMC}  Suppose $\abar=(a_1,\dots,a_n)$ and $\tp(a_i/MC)$ is $M$-definable for each $i$.  Then $\tp(\abar/MC)$ is also $M$-definable.
\end{Lemma}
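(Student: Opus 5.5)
We argue by induction on $n$, using Fact~\ref{compose} to handle one coordinate at a time. The case $n=1$ is the hypothesis. Suppose the result holds for $n-1$, so $\tp(\abar_{<n}/MC)$ is $M$-definable, where $\abar_{<n}=(a_1,\dots,a_{n-1})$. By Fact~\ref{compose} it then suffices to show that $\tp(a_n/MC\abar_{<n})$ is $M$-definable. Equivalently, and more symmetrically, we want a global $M$-definable type $R(x_1,\dots,x_n)$ with $R\mr{MC}=\tp(\abar/MC)$.

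First we recast the hypothesis globally. For each $i$, let $P_i(x_i)\in S_1(\C)$ be the unique $M$-definable global extension of $\tp(a_i/M)$; this is its unique heir by Fact~\ref{charMdef}. We claim $P_i\mr{MC}=\tp(a_i/MC)$. Given $\phi(x,\zbar)$, the $M$-definition $\theta(\zbar)$ of $\phi$ in $\tp(a_i/MC)$ agrees on $M$ with the defining formula $d\phi(\zbar)$ of $P_i$. Since $M$ is a model, $M\models\forall\zbar(\theta\leftrightarrow d\phi)$, so the two formulas are equivalent everywhere, in particular on $MC$. Similarly, the inductive hypothesis lets us write $\tp(\abar_{<n}/MC)=Q\mr{MC}$, where $Q$ is the global $M$-definable extension of $\tp(\abar_{<n}/M)$.

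Next we assemble the coordinates. Suppose we can find a realization $\abar'$, in some $\C^+\succeq\C$, of
$$\tp(\abar/MC)\cup Q(\xbar_{<n})\cup P_n(x_n).$$
Then $\tp(\abar'/\C)$ is $M$-invariant by Proposition~\ref{EITprop}. Its coordinate restrictions are $M$-definable, so $\tp(\abar'/\C)$ is $M$-definable by Corollary~\ref{coordinates}(2). Restricting to $MC$ then gives the $M$-definability of $\tp(\abar/MC)$, which proves the lemma. (Alternatively, one could realize only $P_n$ over $\C\abar_{<n}$ and argue directly through Fact~\ref{compose}; the two routes need the same consistency statement.)

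The main obstacle is proving this consistency, and this is exactly where the DLO warning in the text bites. The global types $Q$ and $P_n$ are determined by their restrictions to $M$, but $\tp(\abar/MC)$ might a priori glue the coordinates together over $C$ in a way no global $M$-definable type does. For a finite fragment
$$\psi(\xbar,\cbar)\wedge\phi(\xbar_{<n},\ebar)\wedge\phi'(x_n,\ebar')$$
with $\ebar,\ebar'$ from $\C$, I would first try to move the parameters $\ebar,\ebar'$ into $MC$ and then use that $\abar$ satisfies everything over $MC$. This should work as follows. By definability, $d\phi(\ebar)\wedge d\phi'(\ebar')$ holds, and $\tp(\ebar\ebar'/MC)$ may be replaced by a type finitely satisfied in $M$ over $MC\cbar$ (choose a coheir). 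Then we would like to appeal to the f.s.\ dichotomy and to the commutation in Fact~\ref{charMdef}(3) of $M$-definable types with finitely satisfied ones, in order to swap the order of realizing the parameters and the tuple. If this swap fails, the fallback is to enlarge $C$ to a set full for $M$ and exploit Fact~\ref{fullstationary}, or to expand the language by a unary predicate for $MC$ (preserving monadic NIP) and reduce to Lemma~\ref{overM}.
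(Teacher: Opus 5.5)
\textbf{There is a genuine gap: the lemma's entire content sits in the step you call the main obstacle, and you do not prove it.}

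Your reduction to the consistency of $\tp(\abar/MC)\cup Q(\xbar_{<n})\cup P_n(x_n)$ is valid, but that consistency is equivalent to the conclusion of the lemma. If the lemma holds, the unique global $M$-definable extension of $\tp(\abar/M)$ contains all three pieces. Conversely, you showed that consistency yields the lemma. For this step you offer only strategies (``I would first try'', ``we would like to'', ``if this swap fails''), not an argument.

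The sketch is also not coherent as written. It is unclear what replacing $\tp(\ebar\ebar'/MC)$ by a type finitely satisfied in $M$ ``over $MC\cbar$'' means when $\cbar$ already lies in $C$. Neither fallback works:
\begin{itemize}
\item Enlarging $C$ to a set full for $M$ discards the hypothesis, which gives $M$-definability only over $MC$.
\item Naming $MC$ by a unary predicate $U$ does not keep $M$ an elementary substructure of the expansion. It also gives no $L(U)$-definability of $\tp(a_i/M)$. So Lemma~\ref{overM} is not available there.
\end{itemize}

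\textbf{Missing idea.} Convert definability into finite satisfiability by heir--coheir duality, then apply the f.s.\ dichotomy to the base set $C$. By induction, $\tp(\abar_{<n}/MC)$ is $M$-definable, hence an heir of $\tp(\abar_{<n}/M)$, so $\tp(C/M\abar_{<n})$ is finitely satisfied in $M$. Now apply the f.s.\ dichotomy with the singleton $a_n$.
\begin{itemize}
\item If $\tp(C/M\abar)$ is finitely satisfied in $M$, then $\tp(\abar/MC)$ is an heir of $\tp(\abar/M)$. That type is $M$-definable by Lemma~\ref{overM}, so it has a unique heir, namely the $M$-definable one.
\item If $\tp(Ca_n/M\abar_{<n})$ is finitely satisfied in $M$, then $\tp(\abar_{<n}/MCa_n)$ is an heir of $\tp(\abar_{<n}/M)$, hence its $M$-definable extension. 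Fact~\ref{compose}, together with the $M$-definability of $\tp(a_n/MC)$, then finishes.
\end{itemize}
In the second case Fact~\ref{compose} is used with the roles reversed from your reduction. One shows directly that $\tp(\abar_{<n}/MCa_n)$ is $M$-definable, not $\tp(a_n/MC\abar_{<n})$.
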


\begin{proof}  First, note that our assumption implies $\tp(a_i/M)$ is $M$-definable for each $i$, hence $\tp(\abar/M)$ is $M$-definable by Lemma~\ref{overM}.
We argue that $\tp(\abar/MC)$ is also $M$-definable by induction on $n$.  For $n=1$ there is nothing to prove.  So assume $\ebar=(e_1,\dots,e_n)$ and a singleton $d$ are given with both  $\tp(\ebar/MC)$ and
$\tp(d/MC)$ $M$-definable.  
Being $M$-definable, $\tp(\ebar/MC)$ is an heir of $\tp(\ebar/M)$, so by  heir-coheir duality, $\tp(C/M\ebar)$ is finitely satisfied in $M$.   We apply the f.s.\ dichotomy to this with the element $d$.   There are two cases.

\medskip
\noindent{\bf Case 1.}  $\tp(C/M\ebar d)$ is finitely satisfied in $M$.
\medskip

In this case, by  heir-coheir duality, $\tp(\ebar d/MC)$ is an heir of $\tp(\ebar d/M)$.   But, as $\tp(\ebar d/M)$ is $M$-definable, it follows from Fact~\ref{charMdef} that this type has a unique heir to $MC$.   Thus, $\tp(\ebar d/MC)$ is this $M$-definable type.

\medskip
\noindent{\bf Case 2.}  $\tp(Cd/M\ebar)$ is finitely satisfied in $M$.
\medskip

Here, by  heir-coheir duality, $\tp(\ebar/MCd)$ is an heir of $\tp(\ebar/M)$.   As $\tp(\ebar/M)$ is $M$-definable, there is only one such heir, namely the $M$-definable one.
Thus $\tp(\ebar/MCd)$ is $M$-definable.  But also, $\tp(d/MC)$ is $M$-definable by assumption, hence $\tp(\ebar d/MC)$ is also $M$-definable by Fact~\ref{compose}.
\qed
\end{proof}

With this in hand, we can now prove a strong decomposition result for $\tp(\abar/MC)$ when $\tp(a_i/MC)$ does not fork over $M$ for every $a_i\in\abar$.  
In what follows, there is no reason why this partition must be proper, i.e., either $F^*$ or $D$ could be empty.  

\begin{Theorem}  \label{partition}
Suppose $\abar=(a_1,\dots,a_n)$ and assume $\tp(a_i/MC)$ does not fork over $M$ for every $i$.  Then there is a partition $[n]=F^*\sqcup D$ so that the induced partition
$\abar=(\fbar,\dbar)$ of $\abar$ satisfies $\tp(\dbar/MC)$ is $M$-definable and $\tp(\fbar/MC\dbar)$ is finitely satisfied in $M$.
\end{Theorem}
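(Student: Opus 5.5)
The plan is to first show that each coordinate type $\tp(a_i/MC)$ is either finitely satisfied in $M$ or $M$-definable. Then I split $[n]$ accordingly, assemble the definable part using Lemma~\ref{overMC}, and assemble the finitely satisfied part using the f.s.\ dichotomy together with Fact~\ref{3.3}(1).

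\emph{Step 1 (each coordinate is f.s.\ or definable).} Fix $i$. Since $\tp(a_i/MC)$ does not fork over $M$, compactness and Fact~\ref{start} give a global $M$-invariant 1-type $p_i\in S_1(\C)$ extending $\tp(a_i/MC)$. Monadic NIP implies dp-minimality (Fact~\ref{monNIPequiv}), so Fact~\ref{dptrichotomy} applies: $p_i$ is finitely satisfied in $M$ or $M$-definable (or both). Restricting to $MC$, $\tp(a_i/MC)$ is finitely satisfied in $M$ or $M$-definable. I then set
$$F^*=\{i:\tp(a_i/MC)\ \hbox{is finitely satisfied in } M\},\qquad D=[n]\setminus F^*.$$
For every $j\in D$, the type $\tp(a_j/MC)$ is $M$-definable and is \emph{not} finitely satisfied in $M$. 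Any coordinate that is both goes into $F^*$.

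\emph{Step 2 (the definable part).} Write $\dbar=(d_1,\dots,d_k)$ for $\abar_D$ and $\fbar$ for $\abar_{F^*}$. Each $\tp(d_j/MC)$ is $M$-definable, so Lemma~\ref{overMC} gives that $\tp(\dbar/MC)$ is $M$-definable.

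\emph{Step 3 (the f.s.\ part over $MC\dbar$).} By Fact~\ref{3.3}(1), applied with base $MC\dbar\supseteq M$, it suffices to show that $\tp(f/MC\dbar)$ is finitely satisfied in $M$ for each singleton $f\in\fbar$. I argue by induction on $j\le k$ that $\tp(f/MCd_1\dots d_j)$ is finitely satisfied in $M$.
\begin{itemize}
\item The case $j=0$ is the definition of $F^*$.
\item For the inductive step, apply the f.s.\ dichotomy to $\tp(f/MCd_{<j})$ and the singleton $d_j$. The first alternative says $\tp(fd_j/MCd_{<j})$ is finitely satisfied in $M$. Restricting, $\tp(d_j/MCd_{<j})$ and hence $\tp(d_j/MC)$ would be finitely satisfied in $M$, contradicting $j\in D$.
\item So the second alternative holds: $\tp(f/MCd_{\le j})$ is finitely satisfied in $M$.
\end{itemize}
At $j=k$ this gives the claim, and Fact~\ref{3.3}(1) then yields that $\tp(\fbar/MC\dbar)$ is finitely satisfied in $M$.

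The only real subtlety I anticipate is Step 1. One needs to pass from non-forking of the small type to a global $M$-invariant extension and then to the dichotomy for the restriction. It is also essential that $D$ consists of coordinates that are \emph{not} finitely satisfied in $M$, since that is exactly what kills the unwanted branch of the f.s.\ dichotomy in Step 3. The remaining steps are direct applications of earlier results.
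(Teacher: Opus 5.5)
Your proof is correct and follows the paper's argument. The same steps appear in both: the partition comes from applying Fact~\ref{dptrichotomy} to a global $M$-invariant extension of each $\tp(a_i/MC)$, Lemma~\ref{overMC} handles the definable block, and the induction along $\dbar$ uses the f.s.\ dichotomy, with the branch in which $d_j$ joins the finitely satisfied type ruled out because $\tp(d_j/MC)$ is not finitely satisfied in $M$. The only cosmetic difference is the order of steps: the paper applies Fact~\ref{3.3}(1) over $MC$ first and then runs the induction with the whole tuple $\fbar$, whereas you run it for each singleton $f$ and apply Fact~\ref{3.3}(1) over $MC\dbar$ at the end; both work.
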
   

\begin{proof}  Let $F^*=\{i\in [n]: \tp(a_i/MC)$ is finitely satisfied in $M\}$ and let $D=[n]\setminus F^*$.    Let $\fbar$ (resp.\ $\dbar$) be the subsequence of $\abar$ induced by $F^*$ (resp.\ $D$).  

Note that if $i\in D$, then since $p_i:=\tp(a_i/MC)$ does not fork over $M$, there is a global 1-type $P_i(x_i)\in S_1(\C)$  extending $p_i$ that is $M$-invariant.  By Fact~\ref{dptrichotomy} $P_i$ is either $M$-definable or else finitely satisfied in $M$.  However, if $P_i$ were finitely satisfied in $M$, then $p_i$ would be as well, contradicting $i\not\in F^*$.  Thus, $P_i$ and hence
$p_i$ is $M$-definable.  Thus, by Lemma~\ref{overMC}, $\tp(\dbar/MC)$ is $M$-definable.  

Next, we know that each singleton $f_i\in\fbar$ has $\tp(f_i/MC)$ finitely satisfied in $M$, so by Fact~\ref{3.3}(1), $\tp(\fbar/MC)$ is finitely satisfied in $M$ as well.
To finish, we must prove that $\tp(\fbar/MC\dbar)$ is finitely satisfied in $M$, which we prove by induction on $\lg(\dbar)$ (so if $D=\emptyset$ there is nothing to prove).
The argument is akin to the proof of the Claim in Proposition~\ref{orthprop}(2).
 Write $\dbar=(d_i:i<k)$ and we argue by induction on $i\le k$ that $\tp(\fbar/MC\dbar_{<i})$ is finitely satisfied in $M$.
To see this, assume $i<k$ and $\tp(\fbar/MC\dbar_{<i})$ is finitely satisfied in $M$.  We apply the f.s.\ dichotomy to this and the element $d_i$.  The point is that
if $\tp(\fbar d_i/MC\dbar_{<i})$ were finitely satisfied in $M$, then $\tp(d_i/MC)$ would be finitely satisfied in $M$, contradicting our choice of $d_i$.
Thus, we must have $\tp(\fbar/MC\dbar_{\le i})$ finitely satisfied in $M$, completing our inductive step.
\qed
\end{proof}

From the result above, the following Corollary is simply an instance of transitivity of non-forking.  

\begin{Corollary}  \label{triviality}   [$T$ monadically NIP]   If $\abar=(a_1,\dots,a_n)$ and $\tp(a_i/MC)$ does not fork over $M$ for every $i$, then
$\tp(\abar/MC)$ does not fork over $M$.
\end{Corollary}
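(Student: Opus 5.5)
By Theorem~\ref{partition}, write $\abar=(\fbar,\dbar)$, where $\tp(\dbar/MC)$ is $M$-definable and $\tp(\fbar/MC\dbar)$ is finitely satisfied in $M$. The plan is to show that each of these two types does not fork over $M$. Transitivity of non-forking (left transitivity) will then give $\fg{\fbar\dbar}{M}{C}$.

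The first type is handled by Fact~\ref{start}. Since $\tp(\dbar/MC)$ is $M$-definable, it has a global $M$-definable extension: the defining schema over $M$ applied to all of $\C$ yields a complete global type, because consistency and completeness transfer from $MC$ along the definitions. This global type is $M$-invariant. Hence every formula in $\tp(\dbar/MC)$ lies in an $M$-invariant global type, so by Fact~\ref{start}, $\fg{\dbar}{M}{C}$.

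For the second type, $\tp(\fbar/MC\dbar)$ is finitely satisfied in $M$. It therefore extends to a global type finitely satisfied in $M$, obtained from an ultrafilter on $M^{\lg(\fbar)}$ containing the traces of its formulas. That global type is $M$-invariant, so by Fact~\ref{start} again, $\fg{\fbar}{M}{C\dbar}$.

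The remaining step is transitivity: if $\fg{\dbar}{M}{C}$ and $\fg{\fbar}{M}{C\dbar}$, then $\fg{\fbar\dbar}{M}{C}$. This holds in any theory for forking, via the standard characterization in terms of indiscernible sequences and dividing. Over models in NIP it can also be argued with invariant types. Take an $M$-invariant global $P\supseteq\tp(\dbar/MC)$. Apply an automorphism over $MC$ so that $\dbar\models P|MC$ lies inside a monster $\C'$; the invariant extension of $\tp(\fbar/MC\dbar)$ then gives an $M$-invariant global type $Q(\xbar_F)$ over $\C'$. Combining $Q$ with $P$ as in the definition of $\otimes$ produces an $M$-invariant global type of the pair extending $\tp(\fbar\dbar/MC)$.

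I expect this transitivity step to be the main obstacle. The delicate point is that the invariant extensions live over $MC\dbar$ rather than a monster containing $\dbar$. The easiest route is to cite the general left-transitivity of non-forking, which holds in all theories, and avoid the invariant-type construction altogether.
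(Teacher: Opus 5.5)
Your proposal is correct and follows the paper's proof: decompose $\abar=(\fbar,\dbar)$ via Theorem~\ref{partition}, observe that the $M$-definable type $\tp(\dbar/MC)$ and the type $\tp(\fbar/MC\dbar)$, finitely satisfied in $M$, each do not fork over $M$, and conclude by left transitivity of non-forking (the paper cites 5.18 of \cite{PierreBook}). Your justifications via global definable and finitely satisfied extensions plus Fact~\ref{start} are fine, with one small correction: the consistency and completeness of the global definable extension transfer from $M\preceq\C$, not from $MC$.
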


\begin{proof}  By Theorem~\ref{partition}  choose a decomposition $\abar=(\fbar,\dbar)$ with $\tp(\dbar/MC)$ $M$-definable and $\tp(\fbar/MC\dbar)$ finitely satisfied in $M$.
Thus, $\tp(\dbar/MC)$ does not fork over $M$ and $\tp(\fbar/MC\dbar)$ does not fork over $M$, so $\tp(\fbar\dbar/MC)$ does not fork over $M$ by transitivity of non-forking, see e.g., 5.18 of \cite{PierreBook}.  
\qed
\end{proof}

\section{Non-forking over arbitrary bases}  \label{ZZZ}  

\medskip\centerline{{\bf Throughout this section, $T$ is assumed to be monadically NIP.}}
\medskip

For the whole of this section,  assume that $M\preceq\C\preceq\C^*$, where $M$ is small in a monster model $\C$, and $\C^*$ is a "monster for $\C$" i.e., $\C$ is small, relative to $\C^*$.   Note that if $p(\xbar)\in S(\C)$ is $M$-invariant, then there is a unique $M$-invariant extension $p^*\in S(\C^*)$ of $p$.  

\medskip

In this section, we extend some of the results from Section~\ref{QQQ}, most notably Corollary~\ref{triviality}, to instances of non-forking over arbitrary small  bases instead of small  models.  
This extension   begins with a standard fact, see e.g., 5.22 of \cite{PierreBook} and the discussion immediately thereafter.  

\begin{Fact}  \label{charforkinv}  Suppose $B\subseteq \C$ is any small set.   A global type $r(\xbar)\in S(\C)$ does not fork over $B$ if and only if $r$ is $M$-invariant for every small $M$ with $B\subseteq M\preceq\C$.  
\end{Fact}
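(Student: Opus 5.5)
The plan is to prove the two directions separately. Only the forward direction uses NIP, which $T$ has since it is monadically NIP. The converse is a general argument about dividing.

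\emph{($\Rightarrow$)} Suppose $r$ does not fork over $B$, and let $B\subseteq M\preceq\C$ be small. Suppose toward a contradiction that $\cbar\equiv_M\cbar'$ but $\phi(\xbar,\cbar)\wedge\neg\phi(\xbar,\cbar')\in r$.

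First I would reduce to the case where $(\cbar,\cbar')$ begins an $M$-indiscernible sequence. Take a global coheir $s$ of $\tp(\cbar/M)$. Let $\ebar$ realize $s\mr{M\cbar\cbar'}$, as in the proof of Proposition~\ref{EITprop}. Then both $(\cbar,\ebar)$ and $(\cbar',\ebar)$ begin Morley sequences of $s$ over $M$, hence $M$-indiscernible sequences. Since $r$ contains $\phi(\xbar,\cbar)$ and $\neg\phi(\xbar,\cbar')$, it contains either $\neg\phi(\xbar,\ebar)$ or $\phi(\xbar,\ebar)$. In the first case use the pair $(\cbar,\ebar)$; in the second case use the pair $(\cbar',\ebar)$, noting that $\phi(\xbar,\ebar)\wedge\neg\phi(\xbar,\cbar')\in r$. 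Either way we get a pair $(\cbar_0,\cbar_1)$ starting an $M$-indiscernible, hence $B$-indiscernible, sequence $(\cbar_i:i<\omega)$, with $\phi(\xbar,\cbar_0)\not\leftrightarrow\phi(\xbar,\cbar_1)$ decided by $r$.

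Let $\psi(\xbar;\cbar_0\cbar_1)$ be the corresponding formula in $r$. The sequence of pairs $(\cbar_{2i}\cbar_{2i+1}:i<\omega)$ is $B$-indiscernible and starts with $\cbar_0\cbar_1$. Since $r$ does not divide over $B$, the set $\{\psi(\xbar;\cbar_{2i}\cbar_{2i+1}):i<\omega\}$ is consistent. Any realization makes $\phi(\xbar,\cbar_i)$ alternate infinitely often, contradicting NIP.

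\emph{($\Leftarrow$)} Suppose $r$ is $M$-invariant for every small model $M\supseteq B$, and suppose $\phi(\xbar,\dbar)\in r$ forks over $B$. Then $\phi$ implies a finite disjunction of formulas dividing over $B$. Since $r$ is complete, some $\psi(\xbar,\ebar)\in r$ divides over $B$. Let this be witnessed by a $B$-indiscernible sequence $(\ebar_i)$ with $\ebar_0=\ebar$ and $\{\psi(\xbar,\ebar_i)\}$ $k$-inconsistent.

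The step I expect to be the main obstacle is finding a model $N\supseteq B$ and such a witnessing sequence that is $N$-indiscernible. I would stretch the sequence to a very long one and pick an arbitrary small $M\supseteq B$. Erdős--Rado together with compactness then yields an $M$-indiscernible sequence $(\ebar'_i)$ whose EM-type over $B$ matches that of $(\ebar_i)$. This sequence is in particular $B$-indiscernible with the same $B$-type, so there is $\sigma\in\Aut(\C/B)$ with $\sigma(\ebar'_i)=\ebar_i$. Put $N=\sigma(M)$; then $(\ebar_i)$ is $N$-indiscernible, so $\ebar_i\equiv_N\ebar$ for all $i$. By $N$-invariance of $r$, every $\psi(\xbar,\ebar_i)$ lies in $r$. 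This contradicts $k$-inconsistency.
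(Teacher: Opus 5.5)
Your proof is correct, and it is essentially the standard argument behind the paper's citation (5.22 of \cite{PierreBook} and the remarks after it). The forward direction is the usual NIP alternation argument showing that a global type not forking over $B$ takes the same value on the first two terms of any $B$-indiscernible sequence; your coheir pair, exactly as in Proposition~\ref{EITprop}, links any $\cbar\equiv_M\cbar'$ in two such steps. The backward direction is the usual argument that such a type cannot divide, after conjugating a model over $B$ so that the dividing sequence becomes indiscernible over it (Ramsey plus compactness would already suffice there in place of Erd\H{o}s--Rado, since the sequence is $B$-indiscernible to begin with).
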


\begin{Corollary}  \label{fuse}
For $\abar,\bbar\in\C^*$, if $\tp(\abar/\C)$ and $\tp(\bbar/\C)$ do not fork over $B$, then neither does $\tp(\abar\bbar/\C)$.
\end{Corollary}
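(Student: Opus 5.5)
The plan is to reduce the statement to Proposition~\ref{EITprop} via Fact~\ref{charforkinv}, working with $\C^*$ in place of $\C$. The types $\tp(\abar/\C)$ and $\tp(\bbar/\C)$ are global types over $\C$. By Fact~\ref{charforkinv}, to show that $\tp(\abar\bbar/\C)$ does not fork over $B$ it suffices to check that it is $M$-invariant for every small $M$ with $B\subseteq M\preceq\C$.

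So fix such an $M$. By Fact~\ref{charforkinv} applied to each tuple separately, $p:=\tp(\abar/\C)$ and $q:=\tp(\bbar/\C)$ are both $M$-invariant global types. Since $T$ is monadically NIP, it has EIT by Fact~\ref{monNIPequiv}. We may regard $\C^*$ as the $\C^+$ of Proposition~\ref{EITprop}; that proposition only needs $\C\preceq\C^+$, with $\abar,\bbar$ realizing $p,q$ there.

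The proposition then yields directly that $\tp(\abar\bbar/\C)$ is $M$-invariant. As $M$ was arbitrary, Fact~\ref{charforkinv} (right to left) gives that $\tp(\abar\bbar/\C)$ does not fork over $B$.

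The only point needing care is the direction and applicability of Fact~\ref{charforkinv}. It is stated for global types over $\C$ and quantifies over all small models $M\preceq\C$ containing $B$; that matches our situation exactly, since the types in question live over $\C$ and are realized in $\C^*$. I expect no real obstacle beyond this bookkeeping. In particular, the tuples must be allowed to lie outside $\C$, which the set-up of this section, with $\C\preceq\C^*$, provides.
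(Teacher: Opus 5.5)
Your proof is correct and is essentially the paper's argument: fix a small $M\supseteq B$, use Fact~\ref{charforkinv} to get $M$-invariance of both types, combine them using EIT, and conclude with Fact~\ref{charforkinv} again. The only cosmetic difference is in the combining step. The paper cites Corollary~\ref{iterateMinv}, whereas you invoke Proposition~\ref{EITprop} directly, which handles the tuples $\abar,\bbar$ without any induction on coordinates.
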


\begin{proof}  Choose any small $M\supseteq B$.   By Fact~\ref{charforkinv}, $\tp(\abar/\C)$ and $\tp(\bbar/\C)$ are both $M$-invariant, hence so is $\tp(\abar\bbar/\C)$ by Corollary~\ref{iterateMinv}.  As $M$ is arbitrary, Fact~\ref{charforkinv} gives $\tp(\abar\bbar/\C)$ non-forking over $B$.
\end{proof}

%
\begin{Proposition} \label{MI}  Suppose $M\preceq\C\preceq \C^*$ with $M$ small and $\C^*$ a monster over $\C$.
Suppose $p(x),q(y)\in S_1(\C)$ with $p(x)$ $M$-invariant, but $q(y)$ not $M$-invariant.  Then for any $a,c\in\C^*$ with $a$ realizing $p$ and $c$ realizing $q$, we have $a$ realizes $p^*|\C c$.
\end{Proposition}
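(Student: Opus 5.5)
The plan is to split according to Fact~\ref{dptrichotomy}. Since $T$ is dp-minimal and $p$ is a global $M$-invariant 1-type, $p$ is either finitely satisfied in $M$ or $M$-definable (or both). In each case I would use one application of the f.s.\ dichotomy (Fact~\ref{monNIPequiv}(3)) to the element $c$, and use the hypothesis that $q$ is \emph{not} $M$-invariant to exclude the unwanted alternative. Throughout, $\C$ is full for $M$. So by Fact~\ref{fullstationary}, to identify $\tp(a/\C c)$ with $p^*|\C c$ it suffices to show that $\tp(a/\C c)$ is finitely satisfied in $M$ (when $p$ is), or is the unique heir of $p\mr{M}$ (when $p$ is $M$-definable, via Fact~\ref{charMdef}).

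\emph{Case 1: $p$ is finitely satisfied in $M$.} Here $\tp(a/\C)=p$ is finitely satisfied in $M$. The f.s.\ dichotomy applied with $c$ says that either $\tp(ac/\C)$ or $\tp(a/\C c)$ is finitely satisfied in $M$. In the first alternative $\tp(c/\C)=q$ would be finitely satisfied in $M$, hence $M$-invariant, which is a contradiction. So $\tp(a/\C c)$ is finitely satisfied in $M$. Both $\tp(a/\C c)$ and $p^*|\C c$ extend $p$ and are finitely satisfied in $M$, so they coincide by Fact~\ref{fullstationary}.

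\emph{Case 2: $p$ is $M$-definable.} Now $\tp(a/\C)$ is an heir of $\tp(a/M)$, so by heir--coheir duality $\tp(\C/Ma)$ is finitely satisfied in $M$. The f.s.\ dichotomy with $c$ again leaves two alternatives.
\begin{itemize}
\item If $\tp(\C c/Ma)$ is finitely satisfied in $M$, duality makes $\tp(a/\C c)$ an heir of $\tp(a/M)$. Since $\tp(a/M)$ is $M$-definable, Fact~\ref{charMdef} gives a unique heir, namely the $M$-definable one $p^*|\C c$, and we are done.
\item Otherwise $\tp(\C/Mac)$ is finitely satisfied in $M$. Then $\tp(ac/\C)$ is an heir of $\tp(ac/M)$, and in particular $q$ is an heir of $\tp(c/M)$. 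This alternative must be refuted using the failure of $M$-invariance of $q$.
\end{itemize}

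The main obstacle is this last subcase, since global heirs of non-definable types need not be $M$-invariant. I would start from witnesses $e,e'\in\C$ with $\tp(e/M)=\tp(e'/M)$ and a formula $\psi(y,z)$ such that $\psi(y,e)\wedge\neg\psi(y,e')\in q$. I would then try to replace $(e,e')$ by a better-behaved pair, for instance two consecutive terms of a Morley sequence over $M$ in a global coheir of $\tp(e/M)$, embedded in an $M$-indiscernible sequence $(\ebar_i:i\in\Z)$ as in Proposition~\ref{EITprop}. Since $p$ is $M$-invariant, this sequence is indiscernible over $Ma$.

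The aim is to show that the sequence is also indiscernible over $Mc$, using that $\tp(\C/Mac)$ is finitely satisfied in $M$. I would try to reflect the relevant finite configurations of the sequence into $M$, where $\tp(c/M)$ does not distinguish them. That indiscernibility over $Mc$ would contradict the choice of $\psi$. If reflecting into $M$ proves awkward, the fallback is EIT: obtain indiscernibility over $Ma$ and over a suitable set coding $c$ separately and combine them.
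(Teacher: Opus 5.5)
Your Case 1 is the paper's argument and is correct. The gap is in Case 2.

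When $p$ is $M$-definable, the conclusion ``$a$ realizes $p^*|\C c$'' says exactly that $\tp(a/\C c)$ is the unique heir of $\tp(a/M)$. By duality, that is the same as $\tp(\C c/Ma)$ being finitely satisfied in $M$. So your first bullet only restates the goal, and all the content sits in your second bullet, which you leave as a sketch.

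That sketch also cannot work as described. It tries to obtain $Mc$-indiscernibility of $I\subseteq\C$ from two facts: $I$ is $Ma$-indiscernible, and $\tp(\C/Mac)$ is finitely satisfied in $M$. These are compatible with $q$ failing to be $M$-invariant. Here is a DLO example.
\begin{enumerate}
\item Take $M=(\mathbb{Q},<)$ and $e\in\C$ in the cut of $\sqrt2$. Let $c$ fill the cut of $\C$ immediately to the right of $e$, and let $a>\C$.
\item Then $p=\tp(a/\C)$ is definable. Also $q=\tp(c/\C)$ is not $M$-invariant: compare $e$ with some $e'>e$ in the same cut of $M$.
\item Yet $\tp(\C/Mac)$ is finitely satisfied in $M$. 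Points of $\C$ in the cut that are $\le e$ reflect into the left half of the cut in $M$, and those $>e$ into the right half.
\end{enumerate}
In this example both alternatives of the dichotomy hold. So the second alternative is not contradictory on its own. A proof must use the failure of the conclusion, and no argument of the form ``reflect finite configurations into $M$'' can force $I$ to be $Mc$-indiscernible.

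The missing ingredient is dp-minimality applied to a \emph{second} sequence built from $a$.
\begin{enumerate}
\item Suppose $a$ does not realize $p^*|\C c$. This is witnessed by $\neg(\phi(a,c,\hbar)\leftrightarrow\theta(c,\hbar))$ with $\hbar\in\C$ and $\theta$ the $p^*$-definition of $\phi$.
\item Let $J=(a_j)_{j<\omega}$ with $a_0=a$ and $a_j\models p^*|\C c\abar_{<j}$. Then $J$ is a Morley sequence of $p^*$ over $\C$, so it is $\C$-indiscernible and in particular $MI$-indiscernible.
\item $I$ is $MJ$-indiscernible, because $\tp(J/\C)$ is $M$-invariant (Corollary~\ref{iterateMinv}).
\item $J$ is not $Mc$-indiscernible. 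The paper detects this in the expansion $(\C^*,U)$ with $U=\C$, via the formula $\exists\zbar\,(\bigwedge_i U(z_i)\wedge\neg[\phi(x,y,\zbar)\leftrightarrow\theta(y,\zbar)])$. It then checks that mutual indiscernibility survives in $L(U)$, since the relevant automorphisms fix $\C$ setwise.
\item Your $I$ is also not $Mc$-indiscernible. Hence $\tp(c/M)$ has dp-rank at least $2$ in a monadically NIP, hence dp-minimal, theory, which is a contradiction.
\end{enumerate}
Your instinct to use EIT can be salvaged, but applied to $J$ rather than $I$. A $\Z$-indexed $\C$-indiscernible extension of $J$ is not $\C c$-indiscernible, so by EIT it is not $c$-indiscernible. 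This even avoids the predicate $U$.
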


\begin{proof}  There are two cases, depending on the species of $M$-invariant type $p(x)$ is.

\medskip
\noindent{\bf Case 1.}  $p$ is finitely satisfied in $M$.  
\medskip

In this case, we apply the f.s.\ dichotomy directly.  We know that $\tp(a/\C)$ is finitely satisfied in $M$.  But, $\tp(ac/\C)$ cannot be finitely satisfied in $M$, since this would imply $\tp(c/\C)$ finitely satisfied in $M$, hence $M$-invariant.
Thus, by the f.s.\ dichotomy, $\tp(a/\C c)$ is finitely satisfied in $M$.   As $\C$ is full for $M$,  Fact~\ref{fullstationary} gives
$a$ realizes $p^*|\C c$, as required.

\medskip
\noindent{\bf Case 2.}  $p$ is  $M$-definable.
\medskip

First, since $q(y)$ is not $M$-invariant, there is a sequence $I=(\ebar_i:i\in \omega)$ that is $M$-indiscernible, but not $Mc$-indiscernible.  The argument is similar to part of the proof of Proposition~\ref{EITprop}.  As $q(y)$ is not $M$-invariant, choose tuples $\bbar,\bbar'$ from $\C$ with $\tp(\bbar/M)=\tp(\bbar'/M)$, but $\tp(\bbar/Mc)\neq\tp(\bbar'/Mc)$.    Let $r(\zbar)=\tp(\bbar/M)$.  As $r$ is (trivially)
finitely satisfied in $M$, choose a global $r^*(\zbar)\in S(\C)$ extending $r$ and finitely satisfied in $M$.  In $\C$, choose
a realization $\dbar$ of $r^*|M\bbar\bbar'$.  From the above inequality, we must have either $\tp(\bbar/Mc)\neq \tp(\dbar/Mc)$ or
$\tp(\bbar'/Mc)\neq\tp(\dbar/Mc)$.   As the arguments are symmetric, assume the former.  Then $(\bbar,\dbar)$ starts a Morley sequence in $r^*$ over $M$.   By compactness, there is an infinite, $M$-indiscernible sequence $I=(\ebar_i:i\in \omega)\subseteq\C$ with $\bbar,\dbar$ occurring in $I$.  The presence of $\bbar,\dbar\in I$ implies that $I$ is not $Mc$-indiscernible.  

Next, as $p$ is $M$-definable, so is the global extension $p^*(x)\in S(\C^*)$.  By way of contradiction, assume $a$ does not realize $p^*|\C c$.   
Then there is some $L$-formula $\phi(x,y,\zbar)$, whose definition is an $L(M)$-formula $\theta(y,\zbar)$, and $\hbar$ from $\C$  such that
$$\C^*\models \neg(\phi(a,c,\hbar)\leftrightarrow \theta(c,\hbar))$$
Choose a sequence $J=(a_j:j\in\omega)\subseteq\C^*$ where $a_0=a$ and $a_j$ realizes $p^*|\C c \abar_{<j}$ for each $j>0$.  Note that by forgetting $c$, $J$ is a Morley sequence in $p^*$ over $\C$, hence $J$ is indiscernible over $\C$.   Also, by
the properties of being a $p^*$-definition, $$\C^*\models \phi(a_j,c,\kbar)\leftrightarrow \theta(c,\kbar)\quad \hbox{for all $j>0, \kbar\in\C^{\lg(\zbar)}$}$$
As a technique for obviating the role of the parameter $\hbar$, we pass to an expansion $\C^*_U=(\C^*,U)$ of $\C^*$ by a unary predicate interpreted as $U^{\C^*_U}=\C$.  Let $L(U)=L\cup\{U\}$.   As $Th(\C^*)$ is monadic NIP, so is $Th(\C^*_U)$.  We will get a contradiction by showing that the two sequences $I$ and $J$ defined above are mutually indiscernible over $M$ in the language $L(U)$, but neither $I$ nor $J$ is indiscernible over $Mc$.  This configuration directly contradicts that $Th(\C^*_U)$ is dp-minimal.  

To see the  point of expanding the language, look at the $L(U)$-formula (with parameters from $M$)
$$\psi(x,y):=\exists \zbar\left(\bigwedge_{z_i\in\zbar} U(z_i)\wedge \neg[\phi(x,y,\zbar)\leftrightarrow \theta(y,\zbar)]\right)$$
Then $\C^*_U\models\psi(a_0,c)\wedge\neg \psi(a_j,c)$ for any $j>0$, so $J$ is not $Mc$-indiscernible in $L(U)$.  
 
It remains to show that $J$ is $MI$-indiscernible in $\C^*_U$ and $I$ is $MJ$-indiscernible in $\C^*_U$.  The former is easy.  From above, $J$ is $\C$-indiscernible in $\C^*$.  However, any $\sigma\in  Aut(\C^*)$ fixing $\C$ pointwise is an automorphism of the expansion $\C^*_U$.  Thus, $J$ is $\C$-indiscernible (and hence $MI$-indiscernible) in $\C^*_U$.
Thus, we are done once we prove the following Claim.

\medskip
\noindent{\bf Claim.}  $I$ is $MJ$-indiscernible in the expansion $\C^*_U$.  

\begin{proof}    Choose increasing sequences $i_1<\dots <j_n<\omega$ and $k_1<\dots<k_n<\omega$.  We begin by working in the original language $L$.  From above we know there is an automorphism $\sigma_0\in\Aut(\C)$ fixing $M$ pointwise with
$\sigma_0(\ebar_{i_j})=\ebar_{k_j}$ for each $1\le j\le n$.  Since $\tp(a_j/\C)=p$ is $M$-invariant for each $j\in\omega$,
$\tp(J/\C)$ is $M$-invariant by Corollary~\ref{iterateMinv}. 
Because   $\sigma_0$ permutes $\C$ while fixing $M$ pointwise, $\sigma_0\cup id_{J}$ is a 
partial $L$-elementary map in $\C^*$.
Since $\C^*$ is a monster model for $\C$ (in the original language)
there is an $L$-automorphism $\sigma^*\in\Aut(\C^*)$ extending $\sigma_0$ fixing $J$ pointwise.   
 In particular,  $\sigma^*$ permutes the set $\C$ while fixing $MJ$ pointwise.
Hence,  $\sigma^*$ is an $L(U)$-automorphism of $\C^*_U$ fixing $MJ$ pointwise and
$\sigma^*(\ebar_{i_j})=\ebar_{k_j}$ for each $j$.   
Thus,   $I$ is $MJ$-indiscernible in the expanded language.
\qed
\end{proof}

Putting all of this together, in the expanded $\C^*_U$, $I$ and $J$ are mutually indiscernible over $M$, with neither $I$ nor $J$
$L(U)$-indiscernible over $Mc$.  Thus, $\hbox{dp-rk}_{L(U)}\tp_{L(U)}(c/M)\ge 2$, contradicting dp-minimality of $Th(\C^*_U)$.
\qed
\end{proof}  

We understand that Chernikov labels the condition below {\em BS-triviality} (Baldwin-Shelah).

\begin{Theorem}  \label{arbbase}
Suppose $\fg {\abar} B C$ and $c$ is any singleton.  Then either $\fg {\abar c} B C$ or $\fg {\abar} B {C c}$.
\end{Theorem}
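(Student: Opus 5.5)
We work entirely with global types, using Fact~\ref{charforkinv} to convert non-forking over $B$ into $M$-invariance for every small $M$ with $B\subseteq M\preceq\C$. Since forking is witnessed by formulas over small sets, we may take the parameter set $BC$ inside the monster $\C$ and realize the relevant extensions in $\C^*$.

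Since $\fg{\abar}BC$, the type $\tp(\abar/BC)$ has a global non-forking extension (NIP extension base over arbitrary sets is not needed; non-forking of a type over $BC$ gives a global extension not forking over $B$ by the usual compactness argument). Choose $\abar'\in\C^*$ realizing this global extension $p(\xbar)\in S(\C)$; we may identify $\abar'$ with $\abar$ over $BC$. Now split on $q(y)=\tp(c/\C)$ where $c$ is placed in $\C$ (so $q$ is realized, i.e.\ algebraic). That is not the right move, so instead move $c$: replace the pair by an automorphic copy over $BC$ so that $c\in\C$ and $\abar$ realizes a global type $p$ not forking over $B$, and let $q=\tp(c/\C)$, which is realized in $\C$ and hence invariant over anything containing $c$ but possibly not over $M$.

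The case split is as follows. In Case A, $\tp(c/BC)$ has a global extension $q'$ not forking over $B$; then take $c'\models q'$ and use Corollary~\ref{fuse} to get $\tp(\abar c'/\C)$ non-forking over $B$, which restricts to $\fg{\abar c}BC$ after an automorphism over $BC$ moving $c'$ to $c$ (one needs $\tp(\abar c'/BC)=\tp(\abar c/BC)$, which is the delicate point; alternatively pick $\abar$ afterward realizing $p|BCc'$-type). In Case B, $\tp(c/BC)$ forks over $B$, so for every global extension $q$ of $\tp(c/BC)$ and some small $M\supseteq B$, $q$ is not $M$-invariant. Then I aim for $\fg{\abar}B{Cc}$: take $c$ realizing a global $q$ in $\C^*$, and $\abar$ realizing $p$. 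By Proposition~\ref{MI}, applied coordinatewise to each $a_i$ (whose global type $p\mr{x_i}$ is $M$-invariant for every model $M\supseteq B$) and with $q$ not $M$-invariant, each $a_i$ realizes $(p\mr{x_i})^*|\C c$, i.e.\ $\tp(a_i/\C c)$ extends to an $M$-invariant type over $\C^*$. The obstacle is that $q$ is only non-$M$-invariant for some $M$, not all. I would first try to handle this by showing that the set of $M$ for which $q$ fails invariance is cofinal: if $q$ is $M_0$-invariant for some $M_0\supseteq B$, then $q$ is $M$-invariant for all $M\supseteq M_0$, so non-forking fails only below. To fix this, I would instead first prove the statement over models (where it follows from the f.s.\ dichotomy and Proposition~\ref{MI} directly), then derive the general case.

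With the coordinatewise conclusion in hand, Corollary~\ref{iterateMinv} (or Corollary~\ref{fuse} inside $\C^*$) combines the coordinates into $\tp(\abar/\C^*)$ being $M$-invariant, giving $\tp(\abar/\C c)$ non-forking over $B$. Restricting to $BCc$ then yields $\fg{\abar}B{Cc}$. The reduction of a singleton $c$ to a global type, together with the need for $M$-invariance uniformly over all $M\supseteq B$, is where I expect the main work.
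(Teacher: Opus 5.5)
Your proposal has genuine gaps, both in the setup and at the key step.

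\textbf{Setup and case split.} You must keep the joint type $\tp(\abar c/BC)$ fixed. The legitimate reduction is to realize a global extension $p$ of $\tp(\abar/BC)$ that does not fork over $B$ by some $\abar'$, and then pick $c'$ with $\tp(\abar'c'/BC)=\tp(\abar c/BC)$. In general $c'$ lies in $\C^*$, not in $\C$. Arranging $c\in\C$ as well would already give $\fg{\abar}{B}{Cc}$, so that move presupposes the conclusion.

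The case split must then be on whether the global type $q=\tp(c/\C)$ of this particular $c$ forks over $B$, not on whether $\tp(c/BC)$ does. Your Case A replaces $c$ by a fresh realization $c'$ of a non-forking extension of $\tp(c/BC)$. As you note, this destroys the joint type. Your argument therefore cannot handle the subcase where $\fg{c}{B}{C}$ holds but $\tp(c/\C)$ forks over $B$, and there the right target is $\fg{\abar}{B}{Cc}$. (That $\fg{\abar c}{B}{C}$ also holds there is Corollary~\ref{trivoverB}, which is derived from this theorem, so you cannot use it.) With the correct split, the non-forking case is immediate from Fact~\ref{charforkinv} and Corollary~\ref{iterateMinv}.

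\textbf{The forking case.} You stop exactly at the crucial point. Fix one $M\supseteq B$ with $q$ not $M$-invariant. Proposition~\ref{MI} then gives $\tp(a_i/\C c)=(p\mr{x_i})^*|\C c$ for each $i$.

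Corollaries~\ref{iterateMinv} and~\ref{fuse} cannot combine these. They concern types over the whole monster, whereas you only control the types of the $a_i$ over the small set $\C c$. Producing a joint non-forking extension is precisely the content of Corollary~\ref{triviality} (via Theorem~\ref{partition}), applied in $\C^*$ with base model $M$ and parameter set $\C c$. That yields $s=\tp(\abar/\C c)$ non-forking over $M$, but only for this one $M$.

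The missing idea for getting from $M$ down to $B$ is uniqueness of invariant extensions over a full set. Extend $s$ to an $M$-invariant $s^*\in S(\C^*)$. Extend $p$ to some $p^*\in S(\C^*)$ not forking over $B$; since $B\subseteq M$, $p^*$ is also $M$-invariant. Both $s^*$ and $p^*$ restrict to $p$ on $\C$, which is full for $M$, so Fact~\ref{fullstationary} forces $s^*=p^*$. Hence $\tp(\abar/\C c)=p^*|\C c$, which does not fork over $B$, and restricting to $BCc$ gives $\fg{\abar}{B}{Cc}$.

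No variation of $M$ is needed. Your suggested detour through the model case is left unspecified and does not address this step.
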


\begin{proof}   To begin, we may assume $\tp(\abar/\C)$ does not fork over $B$.  [Why?  By the non-forking, choose a global
$p(\xbar)\in S(\C)$ extending $\tp(\abar/BC)$ that does not fork over $B$.  Choose any $\abar'$ realizing $p(\xbar)$ and choose any $c'$ such that $\tp(\abar c/BC)=\tp(\abar' c'/BC)$.   Apply the Theorem to $\abar'$ and then take an automorphism of $\C^*$ fixing $BC$ sending $\abar'c'$ to $\abar c$.]

Let $p(\xbar)=\tp(\abar/\C)$.   By Fact~\ref{charforkinv} $p(\xbar)$ is $M$-invariant for every small $M\supseteq B$.

Let $q(y)=\tp(c/\C)$ (this conceivably could be algebraic).  
There are now two cases:

\medskip
\noindent{\bf Case 1.}  $q(y)$ does not fork over $B$.
\medskip

In this case, choose any small $M\supseteq B$.  As both $p(\xbar)$ and $q(y)$ are $M$-invariant, $\tp(\abar c/\C)$ is $M$-invariant as well by Corollary~\ref{iterateMinv}.  As $M$ is arbitrary, we have $\fg {\abar c} B {\C}$ hence $\fg {\abar c} B C$.

\medskip
\noindent{\bf Case 2.}  $q(y)$ forks over $B$.
\medskip

By Fact~\ref{charforkinv} choose a small $M\supseteq B$ with $q(y)$ not $M$-invariant.   Thus, for any $a_i\in\abar$,
by Proposition~\ref{MI} we have $\tp(a_i/\C c)=((p\mr{x_i})^*)|\C c$.  In particular, $\tp(a_i/\C c)$ does not fork over $M$ for each $i$.  
So, by  Corollary~\ref{triviality},  $s:=\tp(\abar/\C c)$ does not fork over $M$.  
Since $s$ does not fork over $M$, choose $s^*\in S(\C^*)$ extending $s$, also non-forking over $M$.     
Also, as $p:=\tp(\abar/\C)$ does not fork over $B$, choose an  extension $p^*\in S(\C^*)$
of $p$ and non-forking over $B$.  As $B\subseteq M$, the type $p^*$ is also $M$-invariant.  Both $p^*$ and $s^*$ restrict to $p$ over $\C$, and since  $\C$ is full for $M$, Fact~\ref{fullstationary} gives $p^*=s^*$.  Consequently, $s=p^*|\C c$, which does not fork over $B$.  Restricting to $BC c$ gives $\fg {\abar} B {C c}$.
\end{proof}


\begin{Remark} {\em 
It is curious that for a fixed global type $q=\tp(c/\C)$, the proof works uniformly for every $p=\tp(\bar a/\C)$ not forking over $B$.  If $q$ does not fork over $B$, then $\fg {\abar c} B {\C}$; otherwise, it gives $\fg {\abar} B {\C c}$.  The two conclusions need not be mutually exclusive.
}
\end{Remark}

With this in hand we easily get the analogue of Corollary~\ref{triviality} for arbitrary base sets.

\begin{Corollary}  \label{trivoverB}     [$T$ monadically NIP]   If $\abar=(a_1,\dots,a_n)$ and $\tp(a_i/BC)$ does not fork over $B$ for every $i$, then
$\tp(\abar/BC)$ does not fork over $B$.
\end{Corollary}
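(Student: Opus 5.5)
Induction on $n$, using Theorem~\ref{arbbase} at each step. The case $n=1$ is the hypothesis. Suppose we already have $\fg{\abar_{<k}}{B}{C}$, where $\abar_{<k}=(a_1,\dots,a_{k-1})$; we want $\fg{\abar_{<k}a_k}{B}{C}$. Apply Theorem~\ref{arbbase} to the tuple $\abar_{<k}$, the base $B$, the set $C$, and the singleton $c=a_k$. One of two things happens.

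First, Theorem~\ref{arbbase} may give $\fg{\abar_{<k}a_k}{B}{C}$ directly, and the inductive step is done.

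Otherwise it gives $\fg{\abar_{<k}}{B}{Ca_k}$. We also know $\fg{a_k}{B}{C}$ by hypothesis. Combining these should give $\fg{\abar_{<k}a_k}{B}{C}$ by left transitivity of non-forking. That transitivity is the only delicate point, since over arbitrary bases it is not automatic, so I would not quote it but prove it here with global extensions.

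\begin{itemize}
\item Choose a global extension $q(y)\in S(\C)$ of $\tp(a_k/BC)$ that does not fork over $B$. After an automorphism over $BC$ we may assume $\tp(a_k/\C)=q$, and we keep $\fg{\abar_{<k}}{B}{Ca_k}$, since that statement is invariant under automorphisms fixing $BC$.
\item Next extend $\tp(\abar_{<k}/BCa_k)$ to a type over $\C a_k$ that does not fork over $B$, and realize it by some $\abar'$. Then $\tp(\abar'a_k/BC)=\tp(\abar_{<k}a_k/BC)$, so it suffices to treat $\abar'a_k$.
\item Fix any small $M\supseteq B$ with $M\preceq\C$. We need $\tp(\abar'a_k/\C)$ to be $M$-invariant; by Fact~\ref{charforkinv}, letting $M$ vary then gives non-forking over $B$. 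The type $q$ is $M$-invariant by Fact~\ref{charforkinv}. The type $\tp(\abar'/\C a_k)$ does not fork over $B\subseteq M$, and should be $M$-invariant by the same fact applied in $\C^*$ over base $\C a_k$.
\item Given $\cbar\equiv_M\cbar'$ in $\C$, the $M$-invariance of $q$ gives an automorphism $\sigma$ over $Ma_k$ with $\sigma(\cbar)=\cbar'$. The $M$-invariance of $\tp(\abar'/\C a_k)$ (inside $\C^*$) then gives $\tp(\cbar/Ma_k\abar')=\tp(\cbar'/Ma_k\abar')$. Hence $\tp(\abar'a_k/\C)$ is $M$-invariant.
\end{itemize}

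This last invariance step is where I expect the main obstacle. A type over $\C a_k$ that does not fork over $B$ may not extend to an $M$-invariant type in the right sense, because $\C a_k$ is not a monster. I would handle it by working in $\C^*$ and taking the non-forking extension of $\tp(\abar_{<k}/BCa_k)$ all the way to $\C^*$; its restriction is then invariant over every small model containing $B$. Alternatively, if that approach is awkward, I would avoid transitivity by noticing that each singleton of the tuple does not fork, and reuse the argument of Case~2 of Theorem~\ref{arbbase} to pass to $\C$.
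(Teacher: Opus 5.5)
Your proposal is correct and follows the paper's proof: induction on $n$, Theorem~\ref{arbbase} applied with $c=a_k$, and in the second case left transitivity of non-forking. The paper simply cites left transitivity, which holds for forking over arbitrary bases in any theory, so your concern there is unfounded; your own derivation is nonetheless valid. It extends $\tp(\abar'/\C a_k)$ to a type over $\C^*$ not forking over $B$, applies Fact~\ref{charforkinv} there, and uses $(a_k,\cbar)\equiv_M(a_k,\cbar')$ to get $M$-invariance of $\tp(\abar' a_k/\C)$.
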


\begin{proof}  By induction on $n$.  For $n=1$ there is nothing to prove, so assume $\fg {\abar} B C$ and $\fg {a^*} B C$ for some singleton $a^*$.    By Theorem~\ref{arbbase}, either $\fg {\abar a^*} B C$ holds, in which case we are done.
Or, $\fg {\abar} B {C a^*}$ and we are done again by left-transitivity of non-forking. 
\qed
\end{proof}

As well, we get the following `transitivity of forking for 1-types,' which was known by Baldwin and Shelah \cite{BS} in the monadically stable setting.

\begin{Corollary}  \label{forkingequivrelation}  Suppose $B$ is any small set and $a,b,c$ are singletons.   Then
$$\nfg a B b \quad \hbox{and} \quad \nfg b B c \quad \Longrightarrow \nfg a B c$$
\end{Corollary}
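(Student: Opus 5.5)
We argue by contraposition, using Theorem~\ref{arbbase} applied to the singleton $b$ together with the symmetry-free form of non-forking. Suppose $\fg a B c$; we want to show that either $\fg a B b$ or $\fg b B c$ holds.

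The first step is to apply Theorem~\ref{arbbase} with $\abar=a$, $C=c$ and the extra singleton $b$. It gives one of two alternatives:
\begin{itemize}
\item $\fg {ab} B c$; or
\item $\fg a B {cb}$.
\end{itemize}
In the second alternative, monotonicity (restricting $\tp(a/Bbc)$ to $Bb$) yields $\fg a B b$, contradicting $\nfg a B b$. So we are in the first alternative, $\fg {ab} B c$. Restricting $\tp(ab/Bc)$ to the variable of $b$ gives $\fg b B c$, contradicting $\nfg b B c$.

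The only potential subtlety is the orientation of the hypotheses. Each hypothesis must be used in the direction in which it is stated: $\nfg a B b$ concerns $\tp(a/Bb)$ and $\nfg b B c$ concerns $\tp(b/Bc)$. This is exactly what the two alternatives deliver:
\begin{itemize}
\item the alternative $\fg a B {cb}$ speaks about $\tp(a/\cdot)$ over a set containing $b$;
\item the alternative $\fg {ab} B c$ speaks about $\tp(b/\cdot)$ over a set containing $c$.
\end{itemize}
No symmetry of non-forking is needed, which matters since symmetry fails in general NIP theories. The conclusion also matches the direction required: $\nfg a B c$ is a statement about $\tp(a/Bc)$.

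Both monotonicity steps are standard. Non-forking of $\tp(\abar/BC)$ is preserved under shrinking $C$ (a formula over a smaller set that forks still forks) and under passing to a subtuple of $\abar$ (a forking formula in the subtuple's variables is a forking formula in the larger tuple of variables). Hence the proof is a short deduction from Theorem~\ref{arbbase}, and I expect no real obstacle beyond checking these variable bookkeeping points.
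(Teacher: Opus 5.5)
Your proposal is correct and is exactly the paper's argument: assuming $\tp(a/Bc)$ does not fork over $B$, you apply Theorem~\ref{arbbase} with $b$ as the extra singleton, which makes either $\tp(ab/Bc)$ or $\tp(a/Bcb)$ non-forking over $B$, and each alternative contradicts one hypothesis by monotonicity. Your explicit check that each hypothesis is used in its stated direction, so no symmetry of non-forking is needed, is a detail the paper leaves implicit.
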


\begin{proof}  By way of contradiction, assume $\fg a B c$.  Then by Theorem~\ref{arbbase}, either $\fg {ab} B c$ holds or
$\fg a B {cb}$ holds, both of which contradict our assumptions.
\qed
\end{proof}

\section{An application}  \label{RRR}
In \cite{SS} Simon and Starchenko introduced the notion of a dp-minimal theory having density of definable types.   They gave a condition (D) and proved that if 
$T$ is dp-minimal and satisfies condition (D), then $T$ has density of definable types.  Here, we use the material above to prove that every monadically NIP theory
also has density of definable types.  The proof here is rather different than in \cite{SS}, and we include an example of a monadically NIP theory that does not have condition (D).

\begin{Definition}  {\em  A theory $T$ has {\em density of definable types among non-forking types} if, for every small model $M\preceq\C$ and every formula
$\phi(\xbar,\cbar)$, $\phi(\xbar,\cbar)$ does not fork over $M$ if and only if there is an $M$-definable global type $p(\xbar)$ with $\phi(\xbar,\cbar)\in p$.\\
A theory $T$ has property (D) if, for all small sets $A$ and all consistent $L(A)$-formulas $\phi(x)$ in one variable, there is an $A$-definable complete type $p\in S_x(A)$ extending $\phi(x)$.
}
\end{Definition}

Theorem~5 of \cite{SS} states that if $T$ is dp-minimal with property (D), then $T$ has density of definable types among non-forking types.  

The one-line proof of the following Theorem is to replace the finitely satisfied part of a block decomposition by a tuple from $M$ and retain the definable block.  

\begin{Theorem}  If $T$ is monadically NIP, then $T$ has density of definable types among non-forking types.  
\end{Theorem}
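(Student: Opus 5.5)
The easy direction comes first. If $p(\xbar)$ is an $M$-definable global type containing $\phi(\xbar,\cbar)$, then $p$ is $M$-invariant, so $\phi(\xbar,\cbar)$ does not fork over $M$ by Fact~\ref{start}. For the converse, suppose $\phi(\xbar,\cbar)$ does not fork over $M$. Again by Fact~\ref{start}, there is an $M$-invariant global type $P(\xbar)$ with $\phi(\xbar,\cbar)\in P$. Choose a realization $\abar=(a_1,\dots,a_n)$ of $P\mr{M\cbar}$. Then $\tp(\abar/M\cbar)$ does not fork over $M$. Apply Theorem~\ref{partition} with $C=\cbar$. This yields a partition $\abar=(\fbar,\dbar)$ with two properties: $\tp(\dbar/M\cbar)$ is $M$-definable, and $\tp(\fbar/M\cbar\dbar)$ is finitely satisfied in $M$.

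The key step is to replace $\fbar$ by a tuple from $M$. We have $\models\phi(\fbar,\dbar,\cbar)$ (after reordering variables). Since $\tp(\fbar/M\cbar\dbar)$ is finitely satisfied in $M$, there is $\mbar\in M$ with $\models\phi(\mbar,\dbar,\cbar)$. Now let $q(\ybar)$ be the unique $M$-definable global extension of $\tp(\dbar/M\cbar)$; this is its heir, by Fact~\ref{charMdef}. Then $\phi(\mbar,\ybar,\cbar)\in q$, because $\mbar,\cbar$ lie in the domain of $\tp(\dbar/M\cbar)$.

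Next define the global type $p(\xbar_F,\xbar_D)$ as the type of $(\mbar,\dbar')$ where $\dbar'\models q$. Concretely, $p$ is the pushforward of $q$ under $\ybar\mapsto(\mbar,\ybar)$. A formula $\psi(\xbar_F,\xbar_D,\ebar)$ lies in $p$ iff $\psi(\mbar,\ybar,\ebar)\in q$. The $q$-definition of $\psi(\mbar,\ybar;\zbar)$ is an $L(M)$-formula, since $\mbar\in M$, so $p$ is $M$-definable. It contains $\phi(\xbar,\cbar)$ by the previous paragraph. This $p$ is the required type.

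I expect the main obstacle to be notational rather than substantive. One must check that $\tp(\dbar/M\cbar)$ being $M$-definable (a type over a small set) indeed extends to an $M$-definable global type containing $\phi(\mbar,\ybar,\cbar)$. This holds because the $M$-definition schema $d_q$ defines a consistent complete global type: consistency transfers from $M$ via the definitions, which agree with the type on $M\cbar$. Here the subtlety flagged in the paper (e.g.\ the DLO example) is harmless, since we only need some $M$-definable global type, namely the one given by the defining schema.
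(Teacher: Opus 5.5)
Your proposal is correct and follows the paper's proof essentially step for step. Like the paper, you realize the restriction of an $M$-invariant global extension of $\phi(\xbar,\cbar)$ to $M\cbar$ and apply Theorem~\ref{partition}. You then replace the finitely satisfied block $\fbar$ by a tuple $\mbar$ from $M$, and take the global type generated by $\xbar_F=\mbar$ together with the $M$-definable global extension of $\tp(\dbar/M\cbar)$.
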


\begin{proof}  As any $M$-definable global type $p$ does not fork over $M$, right to left is obvious.  
%
%
%
%
For left to right, since $\phi(\xbar,\cbar)$ does not fork over $M$, choose an $M$-invariant, global $p(\xbar)\in S_{\xbar}(\C)$ containing $\phi(\xbar,\cbar)$ and choose a realization $\abar$ of $p\mr{M\cbar}$.   Apply  Theorem~\ref{partition} to get a partition $\abar=(\fbar,\dbar)$ as there.   
%

Since $\tp(\fbar/M\cbar\dbar)$ is  finitely satisfied in $M$,  there is an $\mbar\in M^{\lg(\fbar)}$ so
that $\phi(\mbar,\dbar,\cbar)$ holds.    As $\tp(\dbar/M\cbar)$ is $M$-definable, let $q(\xbar_D)\in S(\C)$ be its unique
global heir.   The global type $r(\xbar)$ generated by $\{\xbar_F=\mbar\}\cup q(\xbar_D)$ works.
\qed
\end{proof}

\begin{Example}  A monadically NIP theory $T$ that fails property (D).
\end{Example}

Let $T=Th(M)$, where $M=(2^{\le\omega},\triangleleft)$ is the full binary tree of height $\omega+1$, where $\triangleleft$ is a binary relation interpreted as `proper initial segment'.  As any tree is a substructure of sufficiently large dense tree and since
`being monadic NIP' is preserved under substructures (see e.g., Lemma~2.4 of \cite{BLE}), $T$ is monadically NIP.  

Let $A=2^{<\omega}$ be the finite nodes of $M$.  Note that $M\setminus A$ is defined by the 1-formula $\phi(x)$ asserting that there are no elements properly extending
$x$.   Let $p(x)\in S_x(A)$ be any complete type extending $\phi(x)$.  We argue that $p$ is not $A$-definable.  In particular, there is no $L(A)$-formula
$\theta(y,\abar)$ that could be $d\psi(y)$ for the  formula $\psi(x,y):= y\triangleleft x$.   To see this, we first describe  $$B_p:=\{b\in A:\psi(x,b)\in p\}$$
As $M$ is a tree, for every $n\in\omega$, the cones above the nodes $2^n$ partition $M\setminus A$,  so $B_p\cap 2^n$ has exactly one element.    Moreover, the elements of $B_p$ are linearly ordered by $\triangleleft$.  Thus, $B_p$ describes an  $\omega$-branch through $A$.  

By way of contradiction, suppose  there were an $L(A)$-formula $\theta(y,\abar)$ so that
for every $b\in A$, $M\models \theta(b,\abar)$ if and only if $\psi(x,b)\in p$.   Since the parameters are from $A$, choose an integer $k$ so that the finite sequence 
$\abar\subseteq 2^{< k}$.
From the comments above, choose $b^*\in B_p\cap 2^k$.  Now let $b,b'$ be the two immediate successors of $b^*$ in $A$.  From above, exactly one of these is in $B_p$, say $b\in B_p$, but $b'\not\in B_p$.  Since $M$ is highly homogeneous  there is an automorphism $\sigma\in Aut(M)$ with $\sigma(b)=b'$, but $\sigma\mr{2^{\le k}}=id$.   In particular, $\sigma$ fixes $\abar$ pointwise, so $M\models\theta(b,\abar)\leftrightarrow\theta(\sigma(b),\abar)$.   As $b\in B_p$ while $b'\not\in B_p$, $\theta(y,\abar)$ is not a definition of
$\psi(x,y)$ in $p$.


\begin{thebibliography}{99}

\bibitem{BS}   J.T.\ Baldwin and S.\ Shelah,  Second-order quantifiers and the complexity of theories,
{\em  Notre Dame J.\ Formal Logic}  {\bf  26} (1985), no.\ 3, 229--303.

\bibitem{BL1}  S.\ Braunfeld and M.C.\ Laskowski, Characterizations of monadic NIP, {\em Trans.\ Amer.\ Math.\ Soc.\  Series B}  {\bf  8} (2021), 948--970.

\bibitem{BL2}  S.\ Braunfeld and M.C.\ Laskowski, Corrigenda to ``Characterizations of monadic NIP", {\em Trans.\ Amer.\ Math.\ Soc.\  Series B}  
 {\bf 11} (2024), 1226--1232.
 
\bibitem{BLE}  S.\ Braunfeld and M.C.\  Laskowski, Existential characterizations of monadic NIP, {\em Communications of the American Mathematical Society} {\bf 6} (2026), 244--267. 


\bibitem{LP}  D.\ Lascar and B. Poizat, An introduction to forking,
{\em Journal of Symbolic Logic}  {\bf  44} (1979), no.~3, 330--350.


\bibitem{PierreBook}  P.\ Simon,  {\em A guide to NIP theories,} Lecture Notes in Logic, 44. Association for Symbolic Logic, Chicago, IL; Cambridge University Press, Cambridge, 2015.

\bibitem{dpInv}  P.\ Simon, Dp-minimality: invariant types and dp-rank, {\em Journal of Symbolic Logic}  {\bf  79} (2014), no.~4, 1025--1045.  


\bibitem{SS}  P.\ Simon and S.\ Starchenko, On forking and definability of types in some dp-minimal theories, {\em  Journal of Symbolic  Logic}  {\bf 79} (2014), no.~4, 1020--1024. 






\end{thebibliography}
 \end{document}